\newtheorem{theorem}{Theorem}
\theoremstyle{plain}
\newtheorem{definition}{Definition}
\newtheorem{lemma}{Lemma}
\newtheorem{proposition}{Proposition}
\theoremstyle{remark}
\newtheorem{example}{Example}
\newtheorem{remark}{Remark}
\numberwithin{equation}{section}
\begin{document}
\title[Automorphisms and class groups]{Automorphisms of OT\ manifolds and ray class numbers}
\author{Oliver Braunling}
\address{FRIAS, Albert Ludwig University of Freiburg, Albertstra\ss e 19, 79104
Freiburg im Breisgau, Germany}
\email{oliver.braeunling@math.uni-freiburg.de}
\author{Victor Vuletescu}
\address{Victor Vuletescu, University of Bucharest, Faculty of Mathematics, 14
Academiei str., 70109 Bucharest, Romania}
\email{vuli@fmi.unibuc.ro}
\thanks{O.B. was supported by the GK1821 \textquotedblleft Cohomological Methods in
Geometry\textquotedblright\ and a FRIAS\ Junior Fellowship.}
\thanks{V.V. was supported by a grant of Ministry of Research and Innovation, CNCS -
UEFISCDI, project number PN-III-P4-ID-PCE-2016-0065, within PNCDI III.}

\begin{abstract}
We compute the automorphism group of OT manifolds of simple type. We show that
the graded pieces under a natural filtration are related to a certain ray
class group of the underlying number field. This does not solve the open
question whether the geometry of the OT manifold sees the class number
directly, but brings us a lot closer to a possible solution.

\end{abstract}
\maketitle

Let $K$ be a number field with $s\geq1$ real places and $t\geq1$ complex
places. For suitable choices of a subgroup $U\subseteq\mathcal{O}_{K}%
^{\times,+}$ of the totally positive units, there is a properly discontinuous
action of $\mathcal{O}_{K}\rtimes U$ on $\mathbb{H}^{s}\times\mathbb{C}^{t}$,
essentially based on embedding $K$ via its infinite places and letting the
group act by addition and multiplication. The key point is that
\begin{equation}
X(K,U):=(\mathbb{H}^{s}\times\mathbb{C}^{t})\left.  /\right.  (\mathcal{O}%
_{K}\rtimes U)\label{lcc6}%
\end{equation}
becomes a compact complex manifold, a so-called\ \emph{Oeljeklaus--Toma
manifold} (or \textquotedblleft OT\ manifold\textquotedblright)
\cite{MR2141693}, \cite{MR3195237}. These manifolds are, in a way,
higher-dimensional analogues of the type $S^{0}$ Inoue surfaces, one of the
better understood types among the Class VII$_{0}$ surfaces in Kodaira's classification.

If $t=1$, then knowing $X:=X(K,U)$, even just its fundamental group, suffices
to reconstruct the number field $K$ uniquely. This can fail for $t>1 $.
However, whenever $K$ is fully determined by $X$, it is natural to ask whether
one can read off the arithmetic invariants of $K$ directly from the geometry
of $X$.

So far, even for $t=1$, it is \textit{not known} how to read off the class
group or even just the class number of $K$ from $X$. At the same time, several
other invariants are readily accessible, e.g., if $X$ is an OT\ manifold of
simple type:%
\[%
\begin{tabular}
[c]{l|l}%
Geometry of $X$ & Arithmetic of $K$\\\hline
dimension & $s+t$\\
Betti number $b_{1}$ & $s$\\
Betti number $b_{2}$ & $\frac{1}{2}s(s-1)$\\
LCK rank & $s$ (not CM) or $\frac{s}{2}$ (if $K$ is CM)\\
$h^{1,0}$ & $0$\\
$h^{0,1}$ & $\geq s$\\
normalized volume & $\sim_{\mathbb{Q}^{\times}}\sqrt{\left\vert
\text{discriminant}\right\vert }\cdot$regulator\\
admits LCK metric & if and only if\\
& $\quad\left\vert \sigma_{i}(u)\right\vert =\left\vert \sigma_{j}%
(u)\right\vert $ for all $\sigma_{i},\sigma_{j},u$\\
$H_{1}(X,\mathbb{Z})$ & $U\times(\mathcal{O}_{K}/J)$ for certain ideal $J$\\
\quad? & field automorphisms $\operatorname{Aut}(K/\mathbb{Q})$\\
\quad? & class group
\end{tabular}
\
\]
(above, $\sigma_{i},\sigma_{j}$ refers to genuine complex places, i.e. those
complex embeddings whose image does not lie in the reals, and $u$ refers to
any $u\in U$).

We refer to \cite{MR2141693} or \cite{MR3195237} for unexplained terminology.

We will not be able to solve this open problem, but we find invariants which
are of the same arithmetic nature as class groups: ray class groups. This data
turns out to be encoded in the holomorphic automorphism group of $X$.

\begin{theorem}
\label{thm_Aut copy(1)}Suppose $X:=X(K,U)$ is an OT manifold of simple type.
Then the biholomorphism group $\operatorname{Aut}(X)$ is canonically
isomorphic to%
\[
\left(  \left.  \left(  \frac{(\mathcal{O}_{K}:J(U))}{\mathcal{O}_{K}}\right)
\rtimes\left(  \frac{\mathcal{O}_{K}^{\times,+}}{U}\right)  \right.  \right)
\rtimes A_{U}\text{.}%
\]
(We define $A_{U}$ and $J(U)$ in the main body of the paper.) More concretely,
it has a canonical ascending three step filtration $F_{0}\subseteq
F_{1}\subseteq F_{2}$ whose graded pieces are%
\begin{align*}
\operatorname{gr}_{0}^{F}\operatorname{Aut}(X) &  \simeq\mathcal{O}%
_{K}/J(U)\text{;}\\
\operatorname{gr}_{1}^{F}\operatorname{Aut}(X) &  \cong\mathcal{O}_{K}%
^{\times,+}/U\text{;}\\
\operatorname{gr}_{2}^{F}\operatorname{Aut}(X) &  \cong A_{U}\text{.}%
\end{align*}
For $\operatorname{gr}_{0}^{F}\operatorname{Aut}(X)$ the isomorphism is
non-canonical, while for $\operatorname{gr}_{i}^{F}\operatorname{Aut}(X)$ with
$i=1,2$ it is.
\end{theorem}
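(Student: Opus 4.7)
The plan is to realize $\operatorname{Aut}(X)$ as $N/\Gamma$, where $\Gamma := \mathcal{O}_K \rtimes U$ is the deck group of the universal cover $\tilde X := \mathbb{H}^s \times \mathbb{C}^t$ and $N$ is its normalizer inside $\operatorname{Bihol}(\tilde X)$. Every holomorphic automorphism of $X$ lifts to an element of $N$, unique modulo $\Gamma$, so this reformulation loses nothing, and the filtration asserted by the theorem should come directly from natural subgroups of $N$.

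The first and hardest step is to classify the elements of $N$. Conjugation by the translation subgroup $\mathcal{O}_K \subset \Gamma$ forces any $\varphi \in N$ to permute these translations among themselves. Since $\mathcal{O}_K$ acts purely by real translations on each $\mathbb{H}$-factor and by complex translations on each $\mathbb{C}$-factor, the Möbius part of $\varphi$ on the $\mathbb{H}^s$ block must be trivial, and each $\mathbb{C}^t$-component of $\varphi$ must be affine, $z\mapsto az+b$. A permutation of factors, separately within the real and within the complex places, is then also allowed, and necessarily corresponds to a field automorphism of $K$. Pinning down that the relevant subgroup of these permutations is exactly $A_U$, rather than a larger group of symmetries of $\tilde X$, is where I expect the main obstacle to lie: one has to use the simple-type hypothesis together with the $U$-action to rule out exotic biholomorphisms, essentially because a normalizer element also has to conjugate the multiplicatively independent units in $U$ among themselves.

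After the shape of $N$ is fixed, the remainder is algebraic bookkeeping. Writing a lift as $z \mapsto a\cdot \sigma(z) + b$, with $\sigma$ the induced permutation of places, the condition that conjugation preserves $\Gamma$ forces $a \in \mathcal{O}_K^{\times,+}$ (acting diagonally via the infinite places) and $b$ to lie in the fractional ideal $(\mathcal{O}_K:J(U))$ dictated by the requirement $(u-1)b \in \mathcal{O}_K$ for all $u \in U$; this is precisely the ideal whose definition appears in the body of the paper. Dividing by $\Gamma$ absorbs $\mathcal{O}_K$ into the $b$-part and $U$ into the $a$-part, yielding the three-factor semidirect product displayed in the theorem.

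The filtration is then tautological: $F_0$ is the image of pure translations, $F_1$ additionally allows non-trivial scalars in $\mathcal{O}_K^{\times,+}/U$, and $F_2$ is everything. Canonicity of $\operatorname{gr}_1^F$ and $\operatorname{gr}_2^F$ reflects that the scalar part and the induced permutation of places are intrinsic invariants of an automorphism (one reads them off from the derivative, respectively from the induced action on the space of embeddings). By contrast, $\operatorname{gr}_0^F$ is non-canonical because lifting a unit or a field automorphism back to $N$ requires picking a translation $b$, and no such choice is natural; this is exactly the flagged non-canonicity at the bottom of the filtration.
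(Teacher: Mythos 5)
Your framing of $\operatorname{Aut}(X)$ as $N/\Gamma$, with $N$ the normalizer of $\Gamma=\mathcal{O}_K\rtimes U$ in $\operatorname{Bihol}(\mathbb{H}^s\times\mathbb{C}^t)$, is equivalent to what the paper does (the paper computes $\operatorname{Aut}(\pi_1)$ by group theory and then shows a biholomorphism inducing the identity on $\pi_1$ is the identity). But there is a genuine gap at the step you dispose of in one sentence: the claim that any $\varphi\in N$ is affine. The group $\operatorname{Bihol}(\mathbb{H}^s\times\mathbb{C}^t)$ is infinite-dimensional -- it contains, for example, every shear $(w,z)\mapsto(w,\,z+g(w))$ with $g:\mathbb{H}^s\to\mathbb{C}$ holomorphic -- so ``the $\mathbb{C}^t$-components must be affine'' does not follow merely from the fact that $\mathcal{O}_K$ acts by translations. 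Commuting with the translations $\underline{\sigma}(\mathcal{O}_K)$ up to deck transformations only shows that the partial derivatives $\partial\tilde f/\partial z_i$ descend to the quotient $(\mathbb{H}^s\times\mathbb{C}^t)/\underline{\sigma}(\mathcal{O}_K)$; since $\underline{\sigma}(\mathcal{O}_K)$ has rank $s+2t$ in a space of real dimension $2s+2t$, this quotient is non-compact, and periodic holomorphic functions on it have no reason a priori to be constant. The missing input is the Oeljeklaus--Toma result that this quotient is a Cousin group carrying no non-constant holomorphic functions; only then do the derivatives become constant and $\tilde f$ affine. Without this your classification of $N$ does not get off the ground.

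A second, smaller issue is that you explicitly defer the identification of the permutation part with $A_U$ (``where I expect the main obstacle to lie'') rather than proving it. The paper's argument here is not a direct analysis of permutations of places: from the automorphism of $\Gamma$ one extracts a multiplicative and additive map $\varphi$ on the subring $R=\mathbb{Z}[u\mid u\in U]$, and the simple-type hypothesis guarantees that $\operatorname{Frac}(R)=K$, so $\varphi$ extends to a field automorphism of $K$ preserving $U$; one also needs that the subgroup $\mathcal{O}_K$ is characteristic in $\Gamma$ (it is the kernel of $\Gamma\to\Gamma_{\operatorname{ab},\operatorname{fr}}$), which you use implicitly when you assert that $\varphi$ permutes the translations among themselves. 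Your description of the filtration and of which graded pieces are canonical is correct, but it rests on the unproven classification of $N$.
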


See Theorem \ref{thm_Aut}. These graded pieces may look innocuous, but let us
point out that they are related to class-number like invariants of $K$.

\begin{theorem}
\label{thm_m copy copy(1)}Let $K$ be a number field with $s\geq1$ real places
and precisely one complex place. Moreover, suppose $\mathfrak{m}$ is a modulus
such that its finite part $\mathfrak{m}_{0}$ satisfies $J(U_{\mathfrak{m}%
,1})=\mathfrak{m}_{0}$. Then the ray unit group $U_{\mathfrak{m},1}$ is an
admissible subgroup and let $X:=X(K,U_{\mathfrak{m},1})$ be the corresponding
OT manifold. The graded Euler characteristic%
\[
\chi^{F}(\operatorname{Aut}(X))=\prod_{i}\left\vert \operatorname{gr}_{i}%
^{F}\operatorname{Aut}(X)\right\vert ^{(-1)^{i}}%
\]
satisfies%
\[
\frac{h_{\mathfrak{m}}}{h}\leq\frac{\chi^{F}(\operatorname{Aut}(X))}%
{\left\vert A_{U_{\mathfrak{m},1}}\right\vert }\text{,}%
\]
where $h_{\mathfrak{m}}$ denotes the ray class number of $\mathfrak{m}$ and
$h$ is the ordinary class number.
\end{theorem}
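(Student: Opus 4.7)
The plan is to extract closed-form expressions for both sides of the inequality, using Theorem~\ref{thm_Aut copy(1)} on one side and the classical ray class number formula on the other, and then to reduce the comparison to the elementary estimates $\phi(\mathfrak{m}_{0})\le N(\mathfrak{m}_{0})$ and a monotonicity of unit indices under the passage to the totally positive subgroup.

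First I would apply Theorem~\ref{thm_Aut copy(1)} to write
\[
\chi^{F}(\operatorname{Aut}(X)) = \frac{|\mathcal{O}_{K}/J(U_{\mathfrak{m},1})|\cdot|A_{U_{\mathfrak{m},1}}|}{[\mathcal{O}_{K}^{\times,+}:U_{\mathfrak{m},1}]}.
\]
Plugging in the hypothesis $J(U_{\mathfrak{m},1})=\mathfrak{m}_{0}$ collapses the first numerator factor to the absolute norm $N(\mathfrak{m}_{0})$, and dividing by $|A_{U_{\mathfrak{m},1}}|$ gives
\[
\frac{\chi^{F}(\operatorname{Aut}(X))}{|A_{U_{\mathfrak{m},1}}|} = \frac{N(\mathfrak{m}_{0})}{[\mathcal{O}_{K}^{\times,+}:U_{\mathfrak{m},1}]}.
\]

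Second, for $h_{\mathfrak{m}}/h$ I would invoke the classical five-term exact sequence of class field theory
\[
1\to U^{\mathrm{cl}}_{\mathfrak{m},1}\to\mathcal{O}_{K}^{\times}\to(\mathcal{O}_{K}/\mathfrak{m})^{\times}\to\operatorname{Cl}_{\mathfrak{m}}\to\operatorname{Cl}\to 1,
\]
where $U^{\mathrm{cl}}_{\mathfrak{m},1}\subset\mathcal{O}_{K}^{\times}$ is the classical ray unit group. Taking alternating orders of the finite terms gives $h_{\mathfrak{m}}/h$ as the quotient $|(\mathcal{O}_{K}/\mathfrak{m})^{\times}|/[\mathcal{O}_{K}^{\times}:U^{\mathrm{cl}}_{\mathfrak{m},1}]$, whose numerator is $\phi(\mathfrak{m}_{0})\cdot 2^{|\mathfrak{m}_{\infty}|}$. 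Since $U_{\mathfrak{m},1}$ is admissible, it is totally positive and equals $U^{\mathrm{cl}}_{\mathfrak{m},1}\cap\mathcal{O}_{K}^{\times,+}$, so the second isomorphism theorem factors
\[
[\mathcal{O}_{K}^{\times}:U^{\mathrm{cl}}_{\mathfrak{m},1}] = [\mathcal{O}_{K}^{\times}:\mathcal{O}_{K}^{\times,+}\cdot U^{\mathrm{cl}}_{\mathfrak{m},1}]\cdot[\mathcal{O}_{K}^{\times,+}:U_{\mathfrak{m},1}],
\]
and the first factor on the right is precisely what absorbs the sign contribution $2^{|\mathfrak{m}_{\infty}|}$ from the numerator, because $U^{\mathrm{cl}}_{\mathfrak{m},1}$ is already trivial on the signature coordinates at places in $\mathfrak{m}_{\infty}$.

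Combining these ingredients with the trivial product-formula estimate $\phi(\mathfrak{m}_{0})=N(\mathfrak{m}_{0})\prod_{\mathfrak{p}\mid\mathfrak{m}_{0}}(1-1/N\mathfrak{p})\le N(\mathfrak{m}_{0})$ yields the chain
\[
\frac{h_{\mathfrak{m}}}{h} \;\le\; \frac{N(\mathfrak{m}_{0})}{[\mathcal{O}_{K}^{\times,+}:U_{\mathfrak{m},1}]} \;=\; \frac{\chi^{F}(\operatorname{Aut}(X))}{|A_{U_{\mathfrak{m},1}}|},
\]
as desired. The main obstacle I expect is the sign bookkeeping sketched above: one must verify that the factor $2^{|\mathfrak{m}_{\infty}|}$ really is dominated by the relative index $[\mathcal{O}_{K}^{\times}:\mathcal{O}_{K}^{\times,+}\cdot U^{\mathrm{cl}}_{\mathfrak{m},1}]$, even though the signature map $\mathcal{O}_{K}^{\times}\to\{\pm 1\}^{s}$ need not be surjective in general. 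The assumption $t=1$ and the admissibility of $U_{\mathfrak{m},1}$ together constrain the interaction between $\mathfrak{m}_{\infty}$ and the totally positive condition so that this cancellation closes, and the fact that we are proving an inequality rather than an equality is precisely what permits the lossy estimates $\phi\le N$ and the unit-index monotonicity to be used side by side.
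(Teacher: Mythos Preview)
Your overall strategy coincides with the paper's: both sides compute $\chi^{F}(\operatorname{Aut}(X))/|A_{U_{\mathfrak{m},1}}|$ from Theorem~\ref{thm_Aut copy(1)}, both invoke the ray class number formula coming from the standard exact sequence of class field theory, and both finish with the trivial estimate $|(\mathcal{O}_{K}/\mathfrak{m}_{0})^{\times}|\le |\mathcal{O}_{K}/\mathfrak{m}_{0}|$, i.e.\ $\phi(\mathfrak{m}_{0})\le N(\mathfrak{m}_{0})$.

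The one substantive difference is exactly the point you flag as ``the main obstacle''. The paper never touches the sign bookkeeping: it quotes the four-term sequence (Proposition~\ref{Prop_BasicRayClassFieldSequence}, stated under the standing hypothesis that $\mathfrak{m}(P)=1$ at every real place) already in the form
\[
1\longrightarrow \mathcal{O}_{K}^{\times,+}/U_{\mathfrak{m},1}\longrightarrow (\mathcal{O}_{K}/\mathfrak{m}_{0})^{\times}\longrightarrow C_{\mathfrak{m}}\longrightarrow C\longrightarrow 0,
\]
which yields $h_{\mathfrak{m}}/h=\phi(\mathfrak{m}_{0})/[\mathcal{O}_{K}^{\times,+}:U_{\mathfrak{m},1}]$ directly, with no archimedean factor to cancel. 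Your detour through the version with full units $\mathcal{O}_{K}^{\times}$ and the factor $2^{|\mathfrak{m}_{\infty}|}$ does not close as written. Under the relevant hypothesis $\mathfrak{m}_{\infty}=\{\text{all real places}\}$ (without which $U_{\mathfrak{m},1}$ is not admissible in the paper's sense), one has $U^{\mathrm{cl}}_{\mathfrak{m},1}\subseteq\mathcal{O}_{K}^{\times,+}$, so your ``first factor'' is $[\mathcal{O}_{K}^{\times}:\mathcal{O}_{K}^{\times,+}]$, and the quantity you need bounded above by $1$ is
\[
\frac{2^{|\mathfrak{m}_{\infty}|}}{[\mathcal{O}_{K}^{\times}:\mathcal{O}_{K}^{\times,+}\cdot U^{\mathrm{cl}}_{\mathfrak{m},1}]}=\frac{2^{s}}{[\mathcal{O}_{K}^{\times}:\mathcal{O}_{K}^{\times,+}]}=\frac{h^{+}}{h}\ \ge\ 1.
\]
Thus the inequality you hope for points the wrong way whenever $h^{+}>h$, and neither the assumption $t=1$ nor admissibility forces $h^{+}=h$ for $s\ge 2$. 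In short: replace your five-term sequence plus sign manipulation by the totally-positive form of the sequence used in the paper, and the argument goes through in one line; the rest of your proposal is correct and matches the paper.
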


See Theorem \ref{thm_m copy}. The statement of this result uses some
definitions and jargon from number theory, which we shall review and summarize
to the extent needed in Section \ref{sect_RCFT}.

Although not connected to the principal question of this paper, we also obtain
the following result:

\begin{theorem}
Let $K$ be a number field with $s=t=1$ embeddings and $u$ a (totally) positive
fundamental unit, i.e. $\mathcal{O}_{K}^{\times,+}\simeq\mathbb{Z}\left\langle
u\right\rangle $. Then all groups%
\[
U_{n}:=\mathbb{Z}\left\langle u^{n}\right\rangle
\]
are admissible, and for $X_{n}:=X(K,U_{n})$ we have%
\[
\lim_{n\longrightarrow\infty}\frac{\log\left\vert H_{1}(X_{n},\mathbb{Z}%
)_{\operatorname*{tor}}\right\vert }{n}=\log M(f)\text{,}%
\]
where $f$ is the minimal polynomial of the unit $u$, and $M(f)$ denotes the
Mahler measure of $f$. In particular, $\left\vert H_{1}(X_{n},\mathbb{Z}%
)_{\operatorname*{tor}}\right\vert $ always grows asymptotically exponentially
as $n\rightarrow+\infty$.
\end{theorem}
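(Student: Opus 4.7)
The plan is to reduce the computation of the torsion in $H_1(X_n,\mathbb{Z})$ to the absolute norm of $u^n - 1$ and then invoke the classical asymptotic for such norms in terms of Mahler measure. First I would verify admissibility of the $U_n$. In signature $s = t = 1$ the totally positive unit group $\mathcal{O}_K^{\times,+}$ has rank $s = 1$, so every finite-index subgroup has maximal rank and its image under the logarithmic embedding at the unique real place is a full lattice in $\mathbb{R}$. This is precisely the admissibility condition in the OT setup, and it applies to each $U_n = \mathbb{Z}\langle u^n \rangle$; consequently every $X_n$ is a genuine OT manifold and the formula $H_1(X,\mathbb{Z}) \cong U \times \mathcal{O}_K/J(U)$ from the introductory table applies.

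Next I would identify $J(U_n)$ explicitly as the ideal generated by $\{v - 1 : v \in U_n\}$. Since $u^{kn} - 1 = (u^n - 1)(1 + u^n + \cdots + u^{(k-1)n})$ for $k \geq 1$, and analogously (up to a unit) for $k < 0$, this ideal equals the principal ideal $(u^n - 1)\mathcal{O}_K$. Because $U_n \cong \mathbb{Z}$ is torsion-free and $u$ is not a root of unity (so $u^n \neq 1$), I deduce
\[
\bigl|H_1(X_n,\mathbb{Z})_{\operatorname{tor}}\bigr| \;=\; \bigl|\mathcal{O}_K / (u^n - 1)\mathcal{O}_K\bigr| \;=\; \bigl|N_{K/\mathbb{Q}}(u^n - 1)\bigr| \;=\; \prod_{i=1}^{3} |u_i^n - 1|,
\]
where $u_1 \in \mathbb{R}$ and $u_2, u_3 = \overline{u_2} \in \mathbb{C}$ are the three conjugates of $u$.

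Finally I would pass to the limit factor by factor. If $|u_i| > 1$ then $\tfrac{1}{n}\log|u_i^n - 1| = \log|u_i| + \tfrac{1}{n}\log|1 - u_i^{-n}| \to \log|u_i|$, while if $|u_i| < 1$ then $|u_i^n - 1| \to 1$, giving contribution $0$. The borderline case $|u_i| = 1$ does not arise: the unit relation $|u_1| \cdot |u_2|^2 = 1$ together with $u_1 \in \mathbb{R}$ would force $u_1 = \pm 1$ as soon as any $|u_i|$ equals one, contradicting irreducibility of the cubic $f$. Summing the three contributions yields $\sum_i \max(0,\log|u_i|) = \log\prod_i \max(1, |u_i|) = \log M(f)$, proving the limit formula; exponential growth then follows from Kronecker's theorem, which guarantees $M(f) > 1$ because $u$ is not a root of unity. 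The only real technical work is confirming admissibility and the explicit shape of $J(U_n)$; the asymptotic itself is a standard Mahler--Lehmer-type computation.
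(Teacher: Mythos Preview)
Your proposal is correct and follows essentially the same route as the paper: identify $H_1(X_n,\mathbb{Z})_{\operatorname{tor}}$ with $\mathcal{O}_K/(u^n-1)$, rewrite its order as $\prod_i|u_i^n-1|$, and compute the factor-by-factor limit to obtain $\log M(f)$. The only cosmetic differences are that the paper cites an external lemma for $J(U_n)=(u^n-1)$ while you give the direct divisibility argument, and the paper splits into the two cases $|\sigma_1(u)|\gtrless 1$ whereas you treat all factors uniformly; your exclusion of $|u_i|=1$ via irreducibility of $f$ is a slightly cleaner alternative to the paper's appeal to Kronecker at that step.
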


See Theorem \ref{thm_TorsionMahler}. This essentially means that all Inoue
surfaces of type $S^{0}$ sit inside a tree of covering spaces, which are
itself Inoue surfaces of type $S^{0}$, and as we go further into the branches
of the tree, the torsion in $H_{1}$ always grows exponentially, for example%
\[
\cdots\longrightarrow X_{2^{3}}\longrightarrow X_{2^{2}}\longrightarrow
X_{2}\longrightarrow X_{1}\text{,}%
\]
or similarly for all other primes, or other chains of numbers totally ordered
by divisibility. This is not so relevant for our question around class
numbers, but it explains why Inoue surfaces tend to have so much torsion
homology.\medskip

Convention: The word \emph{ring} always means a unital commutative and
associative ring.

\section{Biholomorphisms}

Let $K$ be a number field with $s\geq1$ real places and $t\geq1$ complex
places. Let $U\subseteq\mathcal{O}_{K}^{\times,+}$ be an \emph{admissible}
subgroup, i.e. a rank $s$ free abelian subgroup (see \cite[\S 1]{MR2141693}).
We call $U$ \emph{of simple type} if $K=\mathbb{Q}(u\mid u\in U)$, or
equivalently if there is no proper subfield of $K$ which already contains $U$.

\begin{definition}
\label{Old_def_IdealJ}Suppose $U\subseteq\mathcal{O}_{K}^{\times}$ is an
arbitrary subgroup. Then define%
\[
J(U):=\{\text{\emph{ideal of }}\mathcal{O}_{K}\text{\emph{ generated by }%
}u-1\text{\emph{\ for all} }u\in U\}\text{.}%
\]

\end{definition}

\begin{definition}
We define the fractional ideal%
\begin{equation}
(\mathcal{O}_{K}:J(U)):=\left\{  \beta\in K\mid\forall u\in U:(1-u)\beta
\in\mathcal{O}_{K}\right\}  \text{.}\label{tsyma1}%
\end{equation}
This is also sometimes denoted by $J(U)^{-1}$.
\end{definition}

(From a commutative algebra standpoint, this fractional ideal is the inverse
of $J(U)$ in the sense of invertible $\mathcal{O}_{K}$-modules.)

\begin{definition}
\label{def_AKU}We define $A_{U}:=\{g\in\operatorname*{Aut}(K/\mathbb{Q})\mid
gU=U\}$.
\end{definition}

By $gU=U$ we mean that $g$ sends elements in $U$ to elements in $U$, and not
that it would element-wise fix $U$, and $\operatorname*{Aut}(K/\mathbb{Q})$
denotes field automorphisms of $K$.

When we write $\mathcal{O}_{K}\rtimes U$, we mean the semi-direct product of
the abelian groups $(\mathcal{O}_{K},+)$ and $U$, where $U\subseteq
\mathcal{O}_{K}^{\times}$ acts on $(\mathcal{O}_{K},+)$ by multiplication with
respect to the ring structure of $\mathcal{O}_{K}$. Each element of
$\mathcal{O}_{K}\rtimes U$ can uniquely be written as a pair $(a,b)$ with
$a\in\mathcal{O}_{K}$ and $b\in U$.

For a group $G$, let us write $G_{\operatorname*{ab}}$ for its abelianization,
and if $G$ is abelian, write $G_{\operatorname*{tor}}$ for the subgroup of
torsion elements, and $G_{\operatorname*{fr}}:=G/G_{\operatorname*{tor}}$ for
the torsion-free quotient.

\begin{proposition}
\label{Prop_KappaAgreesWithOK}Let $K$ be a number field and $U\subseteq
\mathcal{O}_{K}^{\times}$ an admissible subgroup. Then for $\pi:=\mathcal{O}%
_{K}\rtimes U$, the kernel $\varkappa$ in the short exact sequence%
\begin{equation}
1\longrightarrow\varkappa\longrightarrow\pi\longrightarrow\pi
_{\operatorname*{ab},\operatorname*{fr}}\longrightarrow1\label{tsy1}%
\end{equation}
is precisely the subgroup $\mathcal{O}_{K}$ appearing in the definition of
$\pi$ as a semi-direct product. The commutator subgroup is%
\[
\lbrack\pi,\pi]=\{\text{pairs }(u,1)\in\pi\mid u\in J(U)\}\text{.}%
\]
Moreover, we have a canonical short exact sequence%
\begin{equation}
0\longrightarrow\mathcal{O}_{K}/J(U)\longrightarrow H_{1}(X(K;U),\mathbb{Z}%
)\longrightarrow U\longrightarrow0\text{,}\label{tsy2}%
\end{equation}
where $\mathcal{O}_{K}/J(U)$ is precisely the torsion subgroup of the middle
term. In particular, this group needs at most $s+2t$ generators.
\end{proposition}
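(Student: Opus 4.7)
The plan is to identify $\pi_1(X(K,U))$ with $\pi = \mathcal{O}_K \rtimes U$ --- which is standard, since $\mathbb{H}^s \times \mathbb{C}^t$ is contractible and the action is free and properly discontinuous --- so that $H_1(X(K,U),\mathbb{Z})$ is simply the abelianization $\pi_{\operatorname{ab}}$. Everything then reduces to a group-theoretic computation inside the semi-direct product.

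The central step is to compute $[\pi,\pi]$. Writing elements of $\pi$ as pairs $(a,b)$ with multiplication $(a,b)(a',b')=(a+ba',\,bb')$ and inverses $(a,b)^{-1}=(-b^{-1}a,\,b^{-1})$, a direct calculation of $ghg^{-1}h^{-1}$ yields
\[
[(a,b),(a',b')] = \bigl(a(1-b') + a'(b-1),\, 1\bigr).
\]
The second coordinate is trivial because $U$ is abelian, and the first coordinate is manifestly an $\mathcal{O}_K$-linear combination of elements $u-1$ with $u \in U$; this gives $[\pi,\pi] \subseteq J(U)\times\{1\}$. For the reverse inclusion, I would observe that $[(0,u),(\alpha,1)] = (\alpha(u-1),\,1)$, and products of such elements exhaust $J(U)$ as an abelian group. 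This bracket-versus-ideal identification is the one step I expect to require genuine verification; once it is in hand, the rest of the proposition is formal.

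Granted $[\pi,\pi] = J(U)\times\{1\}$, the abelianization becomes
\[
\pi_{\operatorname{ab}} \;\cong\; \mathcal{O}_K/J(U) \,\oplus\, U,
\]
the semi-direct product collapsing to a direct sum because the relation that killing commutators enforces, namely $u\cdot a \equiv a \pmod{J(U)}$, is tautological. Since $U$ is admissible of rank $s \geq 1$, some $u \in U$ is nontrivial, so the nonzero principal ideal $(u-1)\mathcal{O}_K \subseteq J(U)$ has finite index in $\mathcal{O}_K$; hence $\mathcal{O}_K/J(U)$ is finite, giving the torsion subgroup of $\pi_{\operatorname{ab}}$, with $U$ as the torsion-free quotient. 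This is exactly (\ref{tsy2}). Pulling back along the projection $\pi \to \pi_{\operatorname{ab},\operatorname{fr}} = U$, the kernel $\varkappa$ consists of those $(a,b) \in \pi$ with $b=1$, which is precisely the $\mathcal{O}_K$-subgroup of $\pi$. Finally, since $\mathcal{O}_K/J(U)$ is a quotient of the rank-$(s+2t)$ free abelian group $\mathcal{O}_K$, it needs at most $s+2t$ generators, completing the proposition.
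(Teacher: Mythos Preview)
Your argument is correct and self-contained. The commutator identity
\[
[(a,b),(a',b')] = \bigl(a(1-b') + a'(b-1),\, 1\bigr)
\]
is right, the two inclusions for $[\pi,\pi]=J(U)\times\{1\}$ are cleanly handled, and the splitting $b\mapsto(0,b)$ gives the direct-sum decomposition of $\pi_{\operatorname{ab}}$. Your identification of $\mathcal{O}_K/J(U)$ as the torsion part uses only that $U$ is free of positive rank and that $\mathcal{O}_K/(u-1)$ is finite, both of which hold here.

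By contrast, the paper does not carry out this computation at all: it simply cites an earlier paper (\cite[Prop.~6]{otarith}, resting on \cite[Thm.~4.2]{MR2875828}) and only supplies the small observation that $U\subseteq\mathcal{O}_K^{\times,+}$ is automatically torsion-free because $K$ has a real embedding. So your proof is not a different route so much as an explicit unpacking of what the citations presumably contain; the advantage is that your write-up is self-contained and makes the proposition independent of those references, at the cost of a paragraph of elementary group theory.
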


\begin{proof}
A proof is given in \cite[Prop. 6]{otarith}, based on \cite[Thm.
4.2]{MR2875828}. Loc. cit. requires $U$ to be a torsion-free subgroup, but
since $K$ has at least one real place and $\sigma:K\hookrightarrow
\mathbb{R}^{\times}$ is injective, either $\mathcal{O}_{K,\operatorname*{tor}%
}^{\times}$ is trivial or we have $\mathcal{O}_{K,\operatorname*{tor}}%
^{\times}=\left\langle -1\right\rangle $. Either way, the subgroup of totally
positive units $\mathcal{O}_{K}^{\times,+}$ is necessarily torsion-free.
\end{proof}

Let $K$ be a number field and $U\subseteq\mathcal{O}_{K}^{\times}$ an
admissible subgroup. Write $X:=X(K,U)$ as in Equation \ref{lcc6} to denote the
corresponding Oeljeklaus--Toma manifold. Let us write%
\[
\underline{\sigma}:\mathcal{O}_{K}\hookrightarrow\mathbb{C}^{s}\times
\mathbb{C}^{t}\qquad\text{for the map}\qquad\alpha\mapsto(\sigma_{1}%
(\alpha),\ldots,\sigma_{s+t}(\alpha))\text{,}%
\]
where $\sigma_{1},\ldots,\sigma_{s}:K\rightarrow\mathbb{R}$ denote the real
embeddings, and $\sigma_{s+1},\ldots,\sigma_{s+t}:K\rightarrow\mathbb{C}$ one
representative for each complex conjugate pair of the genuinely complex embeddings.

\begin{lemma}
\label{Lemma_TranslationAction}There are three constructions which naturally
give biholomorphisms of $X$.

\begin{enumerate}
\item There is a canonical subgroup inclusion $\frac{(\mathcal{O}_{K}%
:J(U))}{\mathcal{O}_{K}}\hookrightarrow\operatorname*{Aut}(X)$, sending any
$\beta\in(\mathcal{O}_{K}:J(U))$ to the biholomorphism%
\begin{align*}
f:\mathbb{H}^{s}\times\mathbb{C}^{t}  &  \longrightarrow\mathbb{H}^{s}%
\times\mathbb{C}^{t}\\
\underline{z}  &  \longmapsto\underline{z}+\underline{\sigma}(\beta)\text{.}%
\end{align*}

\item There is a canonical subgroup inclusion $A_{U}\hookrightarrow
\operatorname*{Aut}(X)$.

\item The action of $\mathcal{O}_{K}^{\times,+}/U$.
\end{enumerate}
\end{lemma}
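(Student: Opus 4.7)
The plan is: for each of the three constructions, produce a holomorphic lift $\tilde f$ to the universal cover $\mathbb{H}^s\times\mathbb{C}^t$, check that $\tilde f$ normalizes the deck group $\pi=\mathcal{O}_K\rtimes U$ so that it descends to a biholomorphism of $X$, and then identify the kernel of the resulting assignment in order to obtain the claimed (injective, in cases 1 and 2) map into $\operatorname{Aut}(X)$.

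For part (1), I take $f_\beta(\underline z)=\underline z+\underline\sigma(\beta)$. This is biholomorphic on $\mathbb{C}^{s+t}$ and preserves $\mathbb{H}^s\times\mathbb{C}^t$ because the real components $\sigma_i(\beta)$ are real, so translation leaves imaginary parts unchanged. A direct computation yields that $f_\beta\circ(a,u)\circ f_\beta^{-1}$ acts as the deck transformation $(a+(1-u)\beta,u)$, which lies in $\pi$ for every $(a,u)\in\pi$ exactly when $(1-u)\beta\in\mathcal{O}_K$ for all $u\in U$ — i.e.\ precisely when $\beta\in(\mathcal{O}_K:J(U))$. Moreover $f_\beta$ itself belongs to $\pi$ iff $\beta\in\mathcal{O}_K$, so the assignment factors through and is injective on $(\mathcal{O}_K:J(U))/\mathcal{O}_K$.

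For part (3), multiplication by $\underline\sigma(v)$ with $v\in\mathcal{O}_K^{\times,+}$ is holomorphic and preserves $\mathbb{H}^s$ by total positivity; conjugation sends $(a,u)$ to $(va,u)\in\pi$, and when $v\in U$ the lift is already in $\pi$, so the construction descends to an action of $\mathcal{O}_K^{\times,+}/U$. For part (2), each $g\in A_U$ permutes the infinite places of $K$, sending real places to real places and complex places to complex places, and this induces a coordinate shuffle $\tilde g$ of $\mathbb{H}^s\times\mathbb{C}^t$ satisfying $\tilde g\circ(a,u)\circ\tilde g^{-1}=(g(a),g(u))\in\pi$ (using that $g$ fixes $\mathcal{O}_K$ as a subring of $K$ and that $gU=U$ by the very definition of $A_U$). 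Injectivity is immediate since a non-trivial $g$ acts non-trivially on the chosen embeddings.

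The main obstacle I anticipate is the $\mathbb{C}^t$-part of (2): a $g\in A_U$ could, a priori, send one of the chosen representatives $\sigma_{s+k}$ to the conjugate $\overline{\sigma_{s+l}}$ of another chosen representative, and the naive coordinate shuffle would then involve complex conjugation in that slot and hence be anti-holomorphic. Ruling this out — or absorbing it into a canonical choice of lift — is the delicate step, and is presumably where one has to use the rank-$s$ admissibility of $U$ together with the constraint $gU=U$ on totally positive units to force $\tilde g$ to be genuinely holomorphic. All other verifications reduce to the routine normalization computations sketched above.
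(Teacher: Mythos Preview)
Your approach for all three parts is the same as the paper's: write down an explicit lift to $\mathbb{H}^{s}\times\mathbb{C}^{t}$, check that it normalizes the deck group $\mathcal{O}_{K}\rtimes U$ (equivalently, that it descends), and identify the kernel. Your normalizer computation in (1), yielding $f_\beta\,(a,u)\,f_\beta^{-1}=(a+(1-u)\beta,\,u)$, is exactly the content of the paper's displayed manipulation of $f(\underline{\sigma}(u)\underline{z}+\underline{\sigma}(\gamma))$; part (3) the paper dismisses as ``Obvious,'' and your one-line argument is all that is needed.

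Where you differ from the paper is in part (2), and here you are \emph{more} careful, not less. The paper simply asserts that $g\in A_{U}$ ``at worst permutes the embeddings,'' writes $\sigma_{i}(g\beta)=\sigma_{\pi(i)}(\beta)$ for a permutation $\pi$ of $\{1,\dots,s+t\}$, and takes the coordinate shuffle $(z_{1},\dots,z_{s+t})\mapsto(z_{\pi(1)},\dots,z_{\pi(s+t)})$, which is manifestly holomorphic. It does \emph{not} discuss the possibility you raise, namely that $\sigma_{s+k}\circ g$ could equal $\overline{\sigma_{s+l}}$ rather than $\sigma_{s+l}$ for the chosen complex representatives, in which case the naive lift would involve a complex conjugation in one slot. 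So the ``delicate step'' you anticipate is not, in fact, handled in the paper's proof either; there is no hidden use of rank-$s$ admissibility or of $gU=U$ that you are missing. Your instinct that this deserves an argument is sound, but you should not expect to extract that argument from the paper --- it simply glosses over the point.
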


\begin{proof}
\textit{(1)} It is clear that this map is holomorphic and invertible on
$\mathbb{H}^{s}\times\mathbb{C}^{t}$. We need to show that it descends to the
quotient modulo $\mathcal{O}_{K}\rtimes U$. To this end, we need to check that
for all $u\in U$ and $\gamma\in\mathcal{O}_{K}$ the identity%
\begin{equation}
f(\underline{\sigma}(u)\underline{z}+\underline{\sigma}(\gamma))\equiv
f(\underline{z})\qquad\operatorname{mod}\qquad\mathcal{O}_{K}\rtimes
U\label{lvx1}%
\end{equation}
holds. Plugging in $f$ on the left hand side, we obtain $\underline{\sigma
}(u)\underline{z}+\underline{\sigma}(\gamma)+\underline{\sigma}(\beta
)\equiv\underline{z}+\underline{\sigma}(u^{-1})\underline{\sigma}%
(\gamma)+\underline{\sigma}(u^{-1})\underline{\sigma}(\beta)$ by letting
$\underline{\sigma}U$ act via $\underline{\sigma}(u^{-1})$,%
\[
=\underline{z}+\underline{\sigma}(u^{-1}\gamma)+\underline{\sigma}(u^{-1}%
\beta)+\underset{0}{\underbrace{\underline{\sigma}(\beta)-\underline{\sigma
}(\beta)}}=\underline{z}+\underline{\sigma}(u^{-1}\gamma)+\underline{\sigma
}((u^{-1}-1)\beta)+\underline{\sigma}(\beta)
\]
and since $u^{-1}\gamma\in\mathcal{O}_{K}$ as $u$ is a unit, as well as
$(u^{-1}-1)\beta\in\mathcal{O}_{K}$ by the very definition of the fractional
ideal $(\mathcal{O}_{K}:J(U))$, we can let $\underline{\sigma}\mathcal{O}_{K}$
act and obtain%
\[
\equiv\underline{z}+\underline{\sigma}(\beta)=f(\underline{z})\text{.}%
\]
This is exactly what we had to show, namely Equation \ref{lvx1}. Thus, $f$
descends to a biholomorphism $X\rightarrow X$. From deck transformation
theory, it follows that $f$ acts trivially on this quotient if and only if
$\underline{\sigma}(\beta)\in\underline{\sigma}\mathcal{O}_{K}$, so in total
we get a well-defined injection from the group $\frac{(\mathcal{O}_{K}%
:J(U))}{\mathcal{O}_{K}}$. This proves our first claim.\newline\textit{(2)} A
field automorphism $g\in A_{U}$ just maps elements to Galois conjugates, so at
worst it permutes the embeddings, say $\pi$ is given by $\sigma_{i}%
(g\beta)=\sigma_{\pi(i)}(\beta)$. Correspondingly, define%
\[
f(z_{1},\ldots,z_{s+t}):=f(z_{\pi(1)},\ldots,z_{\pi(s+t)})
\]
This is a biholomorphism. It descends modulo $\mathcal{O}_{K}\rtimes U$ since
a field automorphism maps $\mathcal{O}_{K}$ to itself, and by assumption we
have $gU=U$, so $U$ is also preserved.\newline\textit{(3)}\ Obvious.
\end{proof}

Consider a semi-direct product $G:=A\rtimes B$ with $A,B$ groups. Let
$\operatorname{Aut}(G;A)\subseteq\operatorname{Aut}(G)$ denote the subgroup of
automorphisms $\theta:G\rightarrow G$ such that $\theta(A)\subseteq A$, i.e.
those automorphisms which map the subgroup $A$ into itself.

We recall a result from group theory due to J. Dietz \cite{MR2981138}:\ There
is a canonical bijection between elements $\theta\in\operatorname{Aut}(G;A)$
and triples $(\alpha,\beta,\delta)$, where

\begin{itemize}
\item $\alpha\in\operatorname{Aut}(A)$,

\item $\delta\in\operatorname{Aut}(B)$,

\item $\beta\in\operatorname*{Map}(B,A)$,
\end{itemize}

such that the following conditions hold:

\begin{enumerate}
\item $\beta(b_{1}b_{2})=\beta(b_{1})\beta(b_{2})^{\delta(b_{1})}$ for all
$b_{1},b_{2}\in B$,

\item $\alpha(a^{b})=\alpha(a)^{\beta(b)\delta(b)}$ for all $a\in A$, $b\in B
$.
\end{enumerate}

We call such a triple $(\alpha,\beta,\delta)$ a \emph{Dietz triple}. This is
proven in \cite{MR2981138}, Lemma 2.1; we use the same notation as in the
paper to make it particularly easy to use the statement loc. cit. directly. In
her paper, Dietz writes a triple $(\alpha,\beta,\delta)$ as a matrix%
\[%
\begin{bmatrix}
\alpha & \beta\\
& \delta
\end{bmatrix}
\text{.}%
\]
To clarify notation, the superscripts in the conditions (1), (2) refer to the
conjugation%
\begin{equation}
g^{h}:=h^{-1}gh\label{lqqq1}%
\end{equation}
for arbitrary $g,h\in G$, and computed in $G$.

\begin{remark}
\label{rmk_Conjugate}We recall a basic fact: If $a\in A$ and $b\in B$, then in
the semi-direct product $A\rtimes B$ the conjugation $a^{b}$ agrees with the
action of $B$ on $A$ which underlies the semi-direct product structure.
\end{remark}

We apply these general remarks to the fundamental group of an OT\ manifold.

\begin{lemma}
\label{lemma_DietzForOT}There is a bijection between elements of
$\operatorname{Aut}(\pi)$ and triples $(\alpha,\beta,\delta)$ with

\begin{itemize}
\item $\alpha\in\operatorname{Aut}(\mathcal{O}_{K},+)$,

\item $\delta\in\operatorname{Aut}(U)$,

\item $\beta\in\operatorname*{Map}(U,\mathcal{O}_{K})$
\end{itemize}

such that the following conditions hold:

\begin{enumerate}
\item $\beta(b_{1}b_{2})=\beta(b_{1})+\beta(b_{2})\delta(b_{1})$ for all
$b_{1},b_{2}\in U$,

\item $\alpha(ab)=\alpha(a)\delta(b)$ for all $a\in\mathcal{O}_{K}$ and $b\in
U$.
\end{enumerate}
\end{lemma}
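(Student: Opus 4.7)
The plan is to invoke Dietz's result directly for $G = \pi = \mathcal{O}_K \rtimes U$ with $A = (\mathcal{O}_K,+)$ and $B = U$. The only nontrivial hypothesis we need is that every automorphism of $\pi$ preserves the subgroup $\mathcal{O}_K$, i.e.\ that $\operatorname{Aut}(\pi) = \operatorname{Aut}(\pi;\mathcal{O}_K)$; once this is in hand, the lemma is a pure translation exercise.

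First, I would establish that $\mathcal{O}_K \subseteq \pi$ is a characteristic subgroup. This is immediate from Proposition \ref{Prop_KappaAgreesWithOK}: the subgroup $\mathcal{O}_K$ coincides with the kernel $\varkappa$ of the canonical surjection $\pi \twoheadrightarrow \pi_{\operatorname{ab},\operatorname{fr}}$, and this latter kernel is defined intrinsically in group-theoretic terms, hence is preserved by every element of $\operatorname{Aut}(\pi)$. Therefore $\operatorname{Aut}(\pi) = \operatorname{Aut}(\pi;\mathcal{O}_K)$, and Dietz's theorem as recalled above produces a bijection between this set and triples $(\alpha,\beta,\delta)$ with $\alpha \in \operatorname{Aut}(\mathcal{O}_K,+)$, $\delta \in \operatorname{Aut}(U)$, $\beta \in \operatorname{Map}(U,\mathcal{O}_K)$, subject to the two conditions stated in the general setup.

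Next, I would rewrite Dietz's two compatibility conditions in the additive notation appropriate for $A = (\mathcal{O}_K,+)$. Condition (1) reads $\beta(b_1 b_2) = \beta(b_1)\,\beta(b_2)^{\delta(b_1)}$ in multiplicative notation. By Remark \ref{rmk_Conjugate}, the conjugation $\beta(b_2)^{\delta(b_1)}$ equals the action of $\delta(b_1) \in U$ on $\beta(b_2) \in \mathcal{O}_K$, which in our case is multiplication in the ring $\mathcal{O}_K$. Rewriting the group law in $A$ additively thus yields $\beta(b_1 b_2) = \beta(b_1) + \beta(b_2)\delta(b_1)$, as claimed. For condition (2), the expression $a^b$ is again the semi-direct action of $b \in U$ on $a \in \mathcal{O}_K$, namely the product $ab$ in the ring. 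On the right, since $A$ is abelian, conjugation by the $A$-part $\beta(b)$ is trivial, so $\alpha(a)^{\beta(b)\delta(b)} = \alpha(a)^{\delta(b)} = \alpha(a)\delta(b)$. Hence condition (2) becomes $\alpha(ab) = \alpha(a)\delta(b)$, again as stated.

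The only genuinely delicate step is the first one, i.e.\ showing that $\mathcal{O}_K$ is characteristic in $\pi$; everything else is bookkeeping. Fortunately this has already been done for us via Proposition \ref{Prop_KappaAgreesWithOK}, so the proof reduces to quoting that proposition, quoting Dietz's lemma, and performing the notational translation above.
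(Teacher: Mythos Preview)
Your proof is correct and follows essentially the same approach as the paper: you use Proposition~\ref{Prop_KappaAgreesWithOK} to see that $\mathcal{O}_K$ is the kernel of $\pi \twoheadrightarrow \pi_{\operatorname{ab},\operatorname{fr}}$ and hence characteristic, then apply Dietz's lemma and translate the two compatibility conditions via Remark~\ref{rmk_Conjugate} and the abelianness of $\mathcal{O}_K$. The paper's proof is step-for-step the same.
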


Here we use the notation \textquotedblleft$\operatorname{Aut}(\mathcal{O}%
_{K},+)$\textquotedblright\ to stress that we talk about the additive group
$(\mathcal{O}_{K},+)$, and not, as one could misunderstand, automorphisms of
$\mathcal{O}_{K}$ as a ring.

\begin{proof}
We wish to apply the above group-theoretical facts to the semi-direct product
$\pi:=\mathcal{O}_{K}\rtimes U$. This entails the following: (1) By Prop.
\ref{Prop_KappaAgreesWithOK} we have%
\[
1\longrightarrow(\mathcal{O}_{K},+)\longrightarrow\pi\longrightarrow
\pi_{\operatorname*{ab},\operatorname*{fr}}\longrightarrow1\text{.}%
\]
Every group automorphism $\theta:\pi\rightarrow\pi$ induces an automorphism of
the abelianization $\pi_{\operatorname*{ab}}$, and further on the torsion-free
quotient $\pi_{\operatorname*{ab},\operatorname*{fr}}$. Hence, by the above
exact sequence $\theta$ maps $(\mathcal{O}_{K},+)$ to itself. Hence,
$\operatorname{Aut}(\pi;(\mathcal{O}_{K},+))=\operatorname{Aut}(\pi)$ is an
equality of groups, i.e. we can describe arbitrary automorphisms using Dietz triples.

Working with the Dietz triples for $\pi$, conditions (1) and (2) unravel as follows:

\begin{enumerate}
\item $\beta(b_{1}b_{2})=\beta(b_{1})+\beta(b_{2})\delta(b_{1})$ for all
$b_{1},b_{2}\in U$,

\item $\alpha(ab)=\alpha(a)\delta(b)$ for all $a\in\mathcal{O}_{K}$ and $b\in
U$.
\end{enumerate}

We justify this: For (1) we write $(\mathcal{O}_{K},+)$ additively, giving
$\beta(b_{1}b_{2})=\beta(b_{1})+\beta(b_{2})^{\delta(b_{1})}$. Note that
$\beta(b_{2})\in\mathcal{O}_{K}$ and $\delta(b_{1})\in U$, so we may use
Remark \ref{rmk_Conjugate}\ to evaluate the conjugation $\beta(b_{2}%
)^{\delta(b_{1})}$ in $\pi$. Thus, $\beta(b_{2})^{\delta(b_{1})}$ is the
action of $\delta(b_{1})$ on $\beta(b_{2})$, but the semi-direct product
$\mathcal{O}_{K}\rtimes U$ is formed by letting $U$ act by multiplication on
$\mathcal{O}_{K}$, so this is simply the product $\beta(b_{2})\delta(b_{1})$
in the ring structure of $\mathcal{O}_{K}$. For (2) the original condition is%
\[
\alpha(a^{b})=\alpha(a)^{\beta(b)\delta(b)}\text{.}%
\]
Now, on the left side again $a\in\mathcal{O}_{K}$ while $b\in U$, so again by
Remark \ref{rmk_Conjugate} this is just the product $ab$ in the ring
$\mathcal{O}_{K}$. We have%
\[
\alpha(a)^{\beta(b)\delta(b)}=\left(  \left.  \alpha(a)^{\beta(b)}\right.
\right)  ^{\delta(b)}\text{.}%
\]
Here $\alpha(a)\in\mathcal{O}_{K}$ and $\beta(b)\in\mathcal{O}_{K}$, so we can
compute the conjugation within the group $\mathcal{O}_{K}$. Being abelian, the
conjugation is necessarily trivial. Thus, the expression simplifies to
$=\alpha(a)^{\delta(b)}$. Again, $\alpha(a)\in\mathcal{O}_{K}$ and
$\delta(b)\in U$, so by Remark \ref{rmk_Conjugate} this is just $\alpha
(a)\delta(b)$ in $\mathcal{O}_{K}$.
\end{proof}

\begin{lemma}
Suppose we are in the situation of the previous lemma. Then the automorphisms
corresponding to triples $(\alpha,\beta,\delta)$ with $\delta
:=\operatorname*{id}$ correspond to a subgroup of $\operatorname{Aut}(\pi)$
which is canonically isomorphic to%
\[
\left\{  \theta\in\operatorname{Aut}(\pi)\mid\delta=\operatorname*{id}%
\right\}  \cong(\mathcal{O}_{K}:J(U))\rtimes\operatorname{Aut}_{R}%
(\mathcal{O}_{K})\text{.}%
\]
Here $R$ is the smallest subring of $\mathcal{O}_{K}$ containing all $u\in U$,
and $\operatorname{Aut}_{R}(\mathcal{O}_{K})$ denotes the $R$-module
automorphisms of $\mathcal{O}_{K}$.
\end{lemma}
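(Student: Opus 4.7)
The plan is to unpack Lemma \ref{lemma_DietzForOT} under the specialization $\delta = \operatorname{id}$, identify the admissible $\alpha$'s and $\beta$'s separately, and then compute the induced group law on such triples.

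For $\alpha$: condition (2) with $\delta = \operatorname{id}$ reads $\alpha(ab) = \alpha(a)\,b$ for all $a \in \mathcal{O}_K$ and $b \in U$. Since $\alpha$ is already additive and $R$ is by definition the smallest subring of $\mathcal{O}_K$ containing $U$, an easy induction (extend from $U$ to monomials in elements of $U$, then to $\mathbb{Z}$-linear combinations) shows that condition (2) is equivalent to $\alpha$ being $R$-linear. Hence the $\alpha$-slot ranges precisely over $\operatorname{Aut}_R(\mathcal{O}_K)$.

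For $\beta$: I claim the admissible $\beta$'s biject with elements $\gamma \in (\mathcal{O}_K : J(U))$ via $\beta_\gamma(b) := (b-1)\gamma$. That $\beta_\gamma$ satisfies (1) is the identity $(b_1 b_2 - 1)\gamma = (b_1 - 1)\gamma + b_1(b_2 - 1)\gamma$, and $\beta_\gamma$ takes values in $\mathcal{O}_K$ precisely when $\gamma$ lies in the fractional ideal. For the converse, fix a $\mathbb{Z}$-basis $u_1, \ldots, u_s$ of the free abelian group $U$ and set $c_i := \beta(u_i)$. Commutativity $\beta(u_i u_j) = \beta(u_j u_i)$, expanded via (1), forces $(u_j - 1)c_i = (u_i - 1)c_j$; since $\mathcal{O}_K$ is an integral domain and each $u_i \neq 1$, the element $\gamma := c_i/(u_i - 1) \in K$ is independent of $i$, and iterating (1) on arbitrary words in the $u_i$ propagates the formula $\beta(b) = (b-1)\gamma$ to all of $U$. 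The constraint $\beta(U) \subseteq \mathcal{O}_K$ then forces $\gamma \in (\mathcal{O}_K : J(U))$, and uniqueness of $\gamma$ is immediate from $\mathcal{O}_K$ being a domain.

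Finally, for the group law: Dietz composition of $(\alpha_1, \beta_1, \operatorname{id})$ with $(\alpha_2, \beta_2, \operatorname{id})$ yields the triple $(\alpha_1 \alpha_2,\, \alpha_1 \circ \beta_2 + \beta_1,\, \operatorname{id})$. Each $\alpha_i \in \operatorname{Aut}_R(\mathcal{O}_K)$ extends uniquely to an $R \otimes_{\mathbb{Z}} \mathbb{Q}$-linear automorphism of $K = \mathcal{O}_K \otimes_{\mathbb{Z}} \mathbb{Q}$, and this extension preserves $(\mathcal{O}_K : J(U))$ since every generator $u - 1$ of $J(U)$ lies in $R$ and $\alpha_i(\mathcal{O}_K) = \mathcal{O}_K$. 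Substituting $\beta_i(b) = (b-1)\gamma_i$ and using this extended $R$-linearity, the composed cocycle becomes $b \mapsto (b-1)(\gamma_1 + \alpha_1(\gamma_2))$. So the group law on pairs is $(\alpha_1, \gamma_1)(\alpha_2, \gamma_2) = (\alpha_1 \alpha_2,\, \gamma_1 + \alpha_1(\gamma_2))$, which is exactly the stated semi-direct product. The main obstacle I anticipate is the middle step, namely exhibiting a single $\gamma$ representing an arbitrary cocycle; this is where commutativity of $U$ and the integral-domain property of $\mathcal{O}_K$ both play an essential role, while the rest is formal unwinding of the Dietz bijection.
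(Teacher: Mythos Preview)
Your argument is correct and follows essentially the same route as the paper: both reduce condition (2) to $R$-linearity of $\alpha$, and both extract the constant $\gamma = \beta(b)/(b-1)$ from the commutativity relation $(b_2-1)\beta(b_1) = (b_1-1)\beta(b_2)$. The only noteworthy difference is in the endgame: the paper establishes the semi-direct product by invoking Lemma~\ref{Lemma_TranslationAction} to populate the kernel of a split short exact sequence, whereas you verify the cocycle condition for $\beta_\gamma$ by hand and compute the Dietz composition law explicitly, which is a bit more self-contained (it avoids the detour through biholomorphisms of $X$) and makes the action of $\operatorname{Aut}_R(\mathcal{O}_K)$ on $(\mathcal{O}_K:J(U))$ visible.
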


\begin{proof}
Assuming $\delta:=\operatorname*{id}$ the Dietz conditions become

\begin{enumerate}
\item $\beta(b_{1}b_{2})=\beta(b_{1})+b_{1}\beta(b_{2})$ for all $b_{1}%
,b_{2}\in U$,

\item $\alpha(ab)=\alpha(a)b$ for all $a\in\mathcal{O}_{K}$ and $b\in U$.
\end{enumerate}

Condition (2) means that $\alpha\in\operatorname{Aut}(\mathcal{O}_{K},+)$ is
not just an automorphism of $(\mathcal{O}_{K},+)$ as an abelian group, but as
an $R$-module over the subring $R\subseteq\mathcal{O}_{K}$ which is defined by
$R:=\mathbb{Z}[u\mid u\in U]$, i.e. the smallest subring of $\mathcal{O}_{K}$
containing all $u\in U$. We write $\alpha\in\operatorname{Aut}_{R}%
(\mathcal{O}_{K})$. Next, we use that $U$ is abelian. From $\beta(b_{2}%
b_{1})=\beta(b_{1}b_{2})$ and (1) we get%
\begin{align*}
\beta(b_{2})+b_{2}\beta(b_{1}) &  =\beta(b_{1})+b_{1}\beta(b_{2})\\
(b_{2}-1)\beta(b_{1}) &  =(b_{1}-1)\beta(b_{2})
\end{align*}
in the ring $\mathcal{O}_{K}$. Pick $b_{1}\in U\setminus\{1\}$ (exists!). Then
for all $b_{2}\in U\setminus\{1\}$ we obtain%
\[
\frac{\beta(b_{1})}{b_{1}-1}=\frac{\beta(b_{2})}{b_{2}-1}%
\]
in the fraction field $K$. Hence, this function is constant as $b_{2}$ varies
over $U\setminus\{1\}$. Let $c_{0}\in K$ be its value. Thus,%
\[
\beta(b)=c_{0}(b-1)
\]
holds for all $b\in U\setminus\{1\}$. Plugging in $b_{1}=b_{2}=1$ in the Dietz
condition (1), we also find $\beta(1)=0$, so this formula is actually valid
for all $b\in U$. Since $\beta(b)\in\mathcal{O}_{K}$ for all $b$ by
assumption, we deduce $c_{0}\in(\mathcal{O}_{K}:J(U))$, see\ Equation
\ref{tsyma1}. Recall that by Lemma \ref{Lemma_TranslationAction} for every
$c_{0}\in(\mathcal{O}_{K}:J(U))$ we in turn get an automorphism (in full
detail: get an biholomorphism of the OT\ manifold, which canonically induces
an automorphism of the fundamental group), so we have shown that there is a
left exact sequence%
\[
1\longrightarrow(\mathcal{O}_{K}:J(U))\longrightarrow\left\{  \theta
\in\operatorname{Aut}(\pi)\mid\delta=\operatorname*{id}\right\}
\longrightarrow\operatorname{Aut}_{R}(\mathcal{O}_{K})\text{,}%
\]
where we read the middle term as those automorphisms whose Dietz triple has
$\delta=\operatorname*{id}$. The left map is $c_{0}\mapsto(\operatorname*{id}%
,\beta,\operatorname*{id})$, where $\beta$ sends $b\mapsto c_{0}(b-1)$, and
the right map is $(\alpha,\beta,\operatorname*{id})\mapsto\alpha$. Indeed,
given any $\alpha\in\operatorname{Aut}_{R}(\mathcal{O}_{K})$ and defining
$\beta(b):=0$, we see that $(\alpha,\beta,\operatorname*{id})$ satisfies the
Dietz conditions. It follows that the above sequence is also exact on the
right and we leave it to the reader to check that this actually defines a
right section, so this is a split exact sequence. We obtain the semi-direct
product decomposition of our claim.
\end{proof}

We obtain a left exact sequence%
\begin{equation}
1\longrightarrow\left\{  \theta\in\operatorname{Aut}(\pi)\mid\delta
=\operatorname*{id}\right\}  \longrightarrow\operatorname{Aut}(\pi)\overset
{T}{\longrightarrow}\operatorname{Aut}(U)\text{,}\label{tsyma6}%
\end{equation}
where the left group corresponds to the triples $(\alpha,\beta
,\operatorname*{id}) $ and the right arrow $T$ is the map $(\alpha
,\beta,\delta)\mapsto\delta$.

\begin{lemma}
Suppose our OT manifold is of simple type. We have $\operatorname*{im}T=A_{U}%
$, where $A_{U}$ is as in Definition \ref{def_AKU}.
\end{lemma}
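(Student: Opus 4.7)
The plan is to prove both inclusions $A_U\subseteq\operatorname*{im}T$ and $\operatorname*{im}T\subseteq A_U$ inside $\operatorname{Aut}(U)$. For this to make sense one must first identify $A_U$ with a subgroup of $\operatorname{Aut}(U)$ via the restriction map $g\mapsto g|_U$, and this restriction is injective precisely because of the simple type hypothesis: $K=\mathbb{Q}(U)$, so a field automorphism of $K/\mathbb{Q}$ is determined by its values on any generating set.

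For $A_U\subseteq\operatorname*{im}T$, I would take $g\in A_U$ and exhibit the Dietz triple $(g|_{\mathcal{O}_K},\,0,\,g|_U)$. Condition (1) is trivial because $\beta\equiv 0$, and condition (2) reduces to multiplicativity of the ring automorphism $g$; the image under $T$ is $g|_U$, which gives the inclusion.

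For the reverse inclusion, given a Dietz triple $(\alpha,\beta,\delta)$ I would construct an element $\phi\in A_U$ restricting to $\delta$ on $U$. Set $c:=\alpha(1)\in\mathcal{O}_K$, which is nonzero because $\alpha$ is a bijection, and define $\phi:=c^{-1}\alpha$ after $\mathbb{Q}$-linearly extending $\alpha$ to $K=\mathcal{O}_K\otimes_{\mathbb{Z}}\mathbb{Q}$. Plugging $a=1$ into condition (2) yields $\alpha(b)=c\cdot\delta(b)$, so $\phi|_U=\delta$ and $\phi(1)=1$; dividing condition (2) by $c$ then gives $\phi(ab)=\phi(a)\delta(b)=\phi(a)\phi(b)$ whenever $a\in\mathcal{O}_K$ and $b\in U$.

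The main obstacle is bootstrapping this partial multiplicativity to a full ring homomorphism on $K\times K$, and this is precisely where the simple type hypothesis does the work. Since $U$ is a multiplicative subgroup, $K=\mathbb{Q}(U)=\mathbb{Q}[U]$ coincides with the $\mathbb{Q}$-linear span of $U$. Writing any $y\in K$ as $\sum q_i u_i$ with $q_i\in\mathbb{Q}$ and $u_i\in U$, the identity $\phi(au_i)=\phi(a)\phi(u_i)$ extends by $\mathbb{Q}$-linearity in $y$ to $\phi(ay)=\phi(a)\phi(y)$, and a second $\mathbb{Q}$-linear expansion in the first argument produces $\phi(xy)=\phi(x)\phi(y)$ on all of $K$. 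Hence $\phi$ is a nonzero ring endomorphism of $K$, automatically a field automorphism since $K/\mathbb{Q}$ is finite, and $\phi(U)=\delta(U)=U$ places it in $A_U$. The image $T(\alpha,\beta,\delta)=\delta=\phi|_U$ is then in $A_U$, finishing the argument.
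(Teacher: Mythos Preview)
Your proof is correct and follows essentially the same strategy as the paper: use Dietz condition (2) to show that $\alpha$, up to a scalar normalization, is multiplicative and hence extends to a field automorphism lying in $A_U$. Your map $\phi=c^{-1}\alpha$ with $c=\alpha(1)$ is exactly the paper's $\varphi(b)=\alpha(ba)/\alpha(a)$ specialized to $a=1$, and your use of $K=\mathbb{Q}[U]$ as the $\mathbb{Q}$-span of $U$ plays the same role as the paper's passage through the subring $R=\mathbb{Z}[U]$ and its fraction field. The only organizational difference is that you also supply the easy inclusion $A_U\subseteq\operatorname{im}T$ via the triple $(g|_{\mathcal{O}_K},0,g|_U)$, which the paper postpones to the splitting argument in the subsequent lemma.
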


\begin{proof}
Let $(\alpha,\beta,\delta)$ be an arbitrary Dietz triple as in Lemma
\ref{lemma_DietzForOT}. Now, $\alpha\in\operatorname{Aut}(\mathcal{O}_{K},+)$.
Pick some $a\in\mathcal{O}_{K}$ such that $\alpha(a)\neq0$ (exists since
$\alpha$ is a bijection). Define a function $\varphi:U\rightarrow K$ by%
\begin{equation}
\varphi(b):=\frac{\alpha(ba)}{\alpha(a)}\qquad\text{for}\qquad b\in
U\text{.}\label{tsyma4}%
\end{equation}
By Dietz condition (2) we have $\alpha(ab)=\alpha(a)\delta(b)$, so this equals
$\delta(b)$. We note that the choice of $a$ irrelevant. We compute%
\[
\varphi(b_{1}b_{2})=\frac{\alpha(b_{1}b_{2}a)}{\alpha(a)}=\frac{\alpha
(b_{1}(b_{2}a))}{\alpha(b_{2}a)}\frac{\alpha(b_{2}a)}{\alpha(a)}\text{,}%
\]
but $\frac{\alpha(b_{1}(b_{2}a))}{\alpha(b_{2}a)}=\varphi(b_{1})$ since, as we
had explained, the choice of $a$ is irrelevant, so we could also take $b_{2}a$
instead (moreover, $\alpha(b_{2}a)=\delta(b_{2})\alpha(a)$ by condition (2)
and since $\delta$ takes values in $U$, $\alpha(a)\neq0$ implies that
$\alpha(b_{2}a)\neq0$, so the division above was fine).\ Thus, we find%
\[
\varphi(b_{1}b_{2})=\varphi(b_{1})\cdot\varphi(b_{2})
\]
for all $b_{1},b_{2}\in U$. Similarly, one checks that $\varphi(b_{1}%
+b_{2})=\varphi(b_{1})+\varphi(b_{2})$. Thus, by linear extension, we obtain
that $\varphi:U\rightarrow K$ can be extended to a ring homomorphism%
\[
\varphi:R\longrightarrow K\text{,}%
\]
where $R$ is the smallest subring of $\mathcal{O}_{K}$ containing all $u\in U
$ as before. As $X$ is by assumption of simple type, there is no proper
subfield of $K$ which already contains $U$. Thus, the field of fractions of
$R$, which by $R\subseteq\mathcal{O}_{K}$ is contained in $K$, must be $K$
itself. Hence, $\varphi$, by extension to the field of fractions
$\varphi(x/y):=\varphi(x)/\varphi(y)$ defines a field automorphism
$\varphi:K\rightarrow K$. As we had remarked below Equation \ref{tsyma4},
$\varphi\mid_{U}=\delta$, but $\delta\in\operatorname{Aut}(U)$, so $\varphi
U\subseteq U$. It follows $\varphi\in A_{U}$.
\end{proof}

\begin{lemma}
Suppose our OT manifold is of simple type. Then for $\pi:=\pi_{1}(X)$,
$\operatorname{Aut}(\pi)$ is canonically isomorphic to%
\[
\left\{  \theta\in\operatorname{Aut}(\pi)\mid\delta=\operatorname*{id}%
\right\}  \rtimes A_{U}\text{.}%
\]

\end{lemma}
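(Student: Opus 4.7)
The plan is to combine the two preceding lemmas and promote the left exact sequence (\ref{tsyma6}) into a split short exact sequence. The previous lemma identifies $\operatorname{im}T = A_{U}$, and the kernel of $T$ is already named as the subgroup $\{\theta \in \operatorname{Aut}(\pi) \mid \delta = \operatorname{id}\}$, which is normal as the kernel of a group homomorphism. Hence there is a short exact sequence
\[
1 \longrightarrow \{\theta \in \operatorname{Aut}(\pi) \mid \delta = \operatorname{id}\} \longrightarrow \operatorname{Aut}(\pi) \overset{T}{\longrightarrow} A_{U} \longrightarrow 1,
\]
and the only remaining task is to produce a section $s\colon A_{U}\to \operatorname{Aut}(\pi)$ of $T$.

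The natural candidate comes from the very definition of $A_{U}$: any $g\in A_{U}$ is a field automorphism of $K$, hence in particular a ring automorphism of $\mathcal{O}_{K}$ preserving $U$ setwise. I would define $s(g)\colon \pi\to\pi$ by $(a,b)\mapsto (g(a),g(b))$ on the semi-direct product presentation $\pi = \mathcal{O}_{K}\rtimes U$. That $s(g)$ is a group homomorphism is a routine check using the multiplicativity of $g$ on the ring $\mathcal{O}_{K}$: the multiplication rule $(a_{1},b_{1})(a_{2},b_{2}) = (a_{1}+b_{1}a_{2},b_{1}b_{2})$ is manifestly preserved because $g$ respects both addition and multiplication. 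Invertibility is immediate from $s(g^{-1}) = s(g)^{-1}$, so $s(g)\in \operatorname{Aut}(\pi)$.

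Next, I would read off the Dietz triple of $s(g)$: we have $\alpha = g\mid_{\mathcal{O}_{K}}$, $\delta = g\mid_{U}$, and $\beta = 0$. In particular $T(s(g)) = g\mid_{U} = g$ (viewing $A_U\subseteq \operatorname{Aut}(U)$ via restriction), so $s$ is a set-theoretic section of $T$. It is also a group homomorphism, since $s(gh)$ and $s(g)s(h)$ agree pointwise on $\pi$ by functoriality of the assignment $g \mapsto (g,g)$ on $\mathcal{O}_K\times U$. The existence of this group-theoretic section splits the short exact sequence above, and therefore exhibits $\operatorname{Aut}(\pi)$ as the semi-direct product claimed.

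There is no real obstacle here; the only subtle point is to remember that the identification $A_U \hookrightarrow \operatorname{Aut}(U)$ under which $T$ lands in $A_U$ is given by restriction, so that the equality $T\circ s = \operatorname{id}_{A_U}$ holds literally, and to verify that the kernel subgroup is normal — but this is automatic for the kernel of any homomorphism. The construction is essentially the same one already used to produce the inclusion $A_{U}\hookrightarrow \operatorname{Aut}(X)$ in Lemma~\ref{Lemma_TranslationAction}(2), now transported to the level of fundamental groups.
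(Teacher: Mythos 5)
Your proposal is correct and follows the paper's own argument: the paper likewise takes the left exact sequence (\ref{tsyma6}) with image $A_U$ from the preceding lemma and splits it by sending $\varphi\in A_U$ to the Dietz triple $(\varphi\mid_{\mathcal{O}_K},0,\varphi\mid_U)$, which is exactly your section $s(g)\colon(a,b)\mapsto(g(a),g(b))$. No issues.
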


\begin{proof}
By the previous lemma and Equation \ref{tsyma6}, we have the exact sequence%
\[
1\longrightarrow\left\{  \theta\in\operatorname{Aut}(\pi)\mid\delta
=\operatorname*{id}\right\}  \longrightarrow\operatorname{Aut}(\pi)\overset
{T}{\longrightarrow}A_{U}\text{.}%
\]
A right splitting is given by sending $\varphi\in A_{U}$ to $(\varphi
\mid_{\mathcal{O}_{K}},0,\varphi\mid_{U})$. The Dietz conditions are easily
seen to hold.
\end{proof}

\begin{lemma}
Suppose our OT\ manifold is of simple type. Then $\operatorname*{Aut}%
\nolimits_{R}(\mathcal{O}_{K})=\mathcal{O}_{K}^{\times}$, where $R$ is the
smallest subring of $\mathcal{O}_{K}$ containing all $u\in U$.
\end{lemma}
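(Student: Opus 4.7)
The plan is to exploit the simple type hypothesis in the only way it can be relevant here: it tells us that $K$ is the field of fractions of $R$, so that after inverting the nonzero elements of $R$, the rank-one $R$-module $\mathcal{O}_K$ becomes a one-dimensional $K$-vector space, and $R$-linear automorphisms become scalar multiplications.

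First I would note that $R \supseteq \mathbb{Z}$, since $R$ is a unital subring, and that the inclusion $R \subseteq \mathcal{O}_K \subseteq K$ exhibits $\mathcal{O}_K$ as a torsion-free $R$-submodule of $K$. The simple type hypothesis means precisely that $\operatorname{Frac}(R) = K$, so localizing $\mathcal{O}_K$ at $R \setminus \{0\}$ yields
\[
\mathcal{O}_K \otimes_R K = K,
\]
as every element of $\mathcal{O}_K$ lies in $K = \operatorname{Frac}(R)$ while the inclusion $1 \in \mathcal{O}_K$ ensures surjectivity onto $K$.

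Next, given $\alpha \in \operatorname{Aut}_R(\mathcal{O}_K)$, I would extend scalars to obtain a $K$-linear automorphism $\widetilde{\alpha}$ of the one-dimensional $K$-vector space $\mathcal{O}_K \otimes_R K = K$. Such a map must be multiplication by some $\lambda \in K^\times$, so restricting back gives $\alpha(x) = \lambda x$ for all $x \in \mathcal{O}_K$. Evaluating at $x = 1$ gives $\lambda = \alpha(1) \in \mathcal{O}_K$, and surjectivity of $\alpha$ yields $y \in \mathcal{O}_K$ with $\lambda y = 1$, so $\lambda^{-1} \in \mathcal{O}_K$ as well. Hence $\lambda \in \mathcal{O}_K^\times$.

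Conversely, any $\lambda \in \mathcal{O}_K^\times$ obviously defines an $R$-linear automorphism of $\mathcal{O}_K$ by multiplication, and the assignment $\lambda \mapsto (x \mapsto \lambda x)$ is a group homomorphism. The two constructions are mutually inverse, yielding the canonical isomorphism $\operatorname{Aut}_R(\mathcal{O}_K) \cong \mathcal{O}_K^\times$. There is no real obstacle here; the only nontrivial input is the simple type assumption, which is what makes $\operatorname{Frac}(R) = K$ and thereby reduces everything to the one-dimensional linear algebra argument above.
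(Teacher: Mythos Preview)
Your proof is correct and follows essentially the same approach as the paper's. Both arguments use the simple type hypothesis to deduce that $\operatorname{Frac}(R)=K$ (equivalently, $\mathbb{Q}\cdot R = K$) and then conclude that any $R$-linear automorphism of $\mathcal{O}_K$ is in fact multiplication by a unit; the paper phrases this by directly checking $\mathcal{O}_K$-linearity via clearing denominators, while you phrase it via base change to $K$, but the content is the same.
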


\begin{proof}
Suppose $g\in\operatorname*{Aut}\nolimits_{R}(\mathcal{O}_{K})$. Let
$\beta,\lambda\in\mathcal{O}_{K}$ be arbitrary. As $X$ is of simple type, we
have $\mathbb{Q}\cdot R=K$, i.e. $\beta=\frac{1}{n}r$ for some $n\geq1$ and
$r\in R$. Then $g(\beta\lambda)=g(\frac{1}{n}r\lambda)=\frac{1}{n}rg(\lambda)
$, as $g$ is an $R$-module homomorphism. Hence, $g(\beta\lambda)=\beta
g(\lambda)$. It follows that $g$ is even an $\mathcal{O}_{K}$-module
homomorphism. Thus, $g\in\operatorname*{Aut}\nolimits_{\mathcal{O}_{K}%
}(\mathcal{O}_{K})$ and since $\mathcal{O}_{K}$ is free of rank one over
itself, $\operatorname*{Aut}\nolimits_{\mathcal{O}_{K}}(\mathcal{O}_{K}%
)\cong\mathcal{O}_{K}^{\times}$; the converse inclusion is obvious.
\end{proof}

Combining the previous lemmas, we obtain the following result.

\begin{proposition}
\label{prop_AutPi}Suppose our OT manifold $X$ is of simple type. Then for
$\pi:=\pi_{1}(X)$, the automorphism group $\operatorname{Aut}(\pi)$ is
canonically isomorphic to%
\[
\left(  \left.  (\mathcal{O}_{K}:J(U))\rtimes\mathcal{O}_{K}^{\times}\right.
\right)  \rtimes A_{U}\text{.}%
\]
More concretely, it has a canonical ascending three step filtration
$F_{0}\subseteq F_{1}\subseteq F_{2}$ whose graded pieces are%
\begin{align*}
\operatorname{gr}_{0}^{F}\operatorname{Aut}(\pi) &  \cong(\mathcal{O}%
_{K}:J(U))\text{;}\\
\operatorname{gr}_{1}^{F}\operatorname{Aut}(\pi) &  \cong\mathcal{O}%
_{K}^{\times}\text{;}\\
\operatorname{gr}_{2}^{F}\operatorname{Aut}(\pi) &  \cong A_{U}\text{.}%
\end{align*}
These isomorphisms are all canonical.
\end{proposition}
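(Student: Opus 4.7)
The plan is to stack the four preceding lemmas into a single semi-direct product decomposition, and then to read off the filtration from the two nested split exact sequences already produced. No genuinely new computation is needed; the bulk of the work is tracking canonicity at each stage.

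First I would use the penultimate lemma to obtain the outer short exact sequence
\[
1 \longrightarrow \{\theta \in \operatorname{Aut}(\pi) \mid \delta = \operatorname{id}\} \longrightarrow \operatorname{Aut}(\pi) \overset{T}{\longrightarrow} A_U \longrightarrow 1,
\]
which splits canonically via $\varphi \mapsto (\varphi|_{\mathcal{O}_K},0,\varphi|_U)$. This lets me set $F_2 := \operatorname{Aut}(\pi)$ and define $F_1$ as the kernel on the left. Since $T$ and its splitting are intrinsically defined, the identification $\operatorname{gr}_2^F \operatorname{Aut}(\pi) \cong A_U$ is canonical.

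Next I would invoke the earlier lemma analysing $F_1$, which produces a second split exact sequence
\[
1 \longrightarrow (\mathcal{O}_K : J(U)) \longrightarrow F_1 \longrightarrow \operatorname{Aut}_R(\mathcal{O}_K) \longrightarrow 1,
\]
where the left map sends $c_0$ to the Dietz triple $(\operatorname{id}, b \mapsto c_0(b-1), \operatorname{id})$ and the right map extracts $\alpha$. I would then take $F_0$ to be the image of the left map. The essential point is that $F_0$ is intrinsic: it is precisely the kernel of the right map $F_1 \to \operatorname{Aut}_R(\mathcal{O}_K)$, so the isomorphism $F_0 \cong (\mathcal{O}_K : J(U))$ is canonical. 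Finally, the last lemma uses simple type to identify $\operatorname{Aut}_R(\mathcal{O}_K) = \mathcal{O}_K^{\times}$ canonically, yielding $\operatorname{gr}_1^F \operatorname{Aut}(\pi) \cong \mathcal{O}_K^{\times}$.

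Substituting these identifications gives the iterated semi-direct product $((\mathcal{O}_K : J(U)) \rtimes \mathcal{O}_K^{\times}) \rtimes A_U$. The only subtlety worth flagging, and hence the main obstacle such as it is, is ensuring that each graded piece is realised as an intrinsic sub-quotient rather than merely as a choice of complement in a split exact sequence. This is precisely why I would define $F_0$ as the kernel of a canonical projection (the $\alpha$-component of the Dietz triple) rather than as the image of a splitting; for $\operatorname{gr}_2^F$ the same concern is handled automatically because $T$ is canonical.
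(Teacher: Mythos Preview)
Your proposal is correct and matches the paper's approach exactly: the paper states the proposition with the single sentence ``Combining the previous lemmas, we obtain the following result,'' and your write-up is precisely the unpacking of that combination, stacking the split exact sequences from the $\delta=\operatorname{id}$ lemma, the $\operatorname{im}T=A_U$ lemma with its splitting, and the identification $\operatorname{Aut}_R(\mathcal{O}_K)=\mathcal{O}_K^{\times}$. Your attention to defining the filtration steps as kernels of canonical maps rather than images of splittings is a sensible elaboration of the canonicity claim that the paper leaves implicit.
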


Now we are ready to prove the key ingredient for our results.

\begin{theorem}
\label{thm_Aut}Suppose our OT manifold is of simple type. Then the
biholomorphism group $\operatorname{Aut}(X)$ is canonically isomorphic to%
\begin{equation}
\left(  \left.  \left(  \frac{(\mathcal{O}_{K}:J(U))}{\mathcal{O}_{K}}\right)
\rtimes\left(  \frac{\mathcal{O}_{K}^{\times,+}}{U}\right)  \right.  \right)
\rtimes A_{U}\text{.}\label{l3}%
\end{equation}
More concretely, it has a canonical ascending three step filtration
$F_{0}\subseteq F_{1}\subseteq F_{2}$ whose graded pieces are%
\begin{align*}
\operatorname{gr}_{0}^{F}\operatorname{Aut}(X) &  \simeq\mathcal{O}%
_{K}/J(U)\text{;}\\
\operatorname{gr}_{1}^{F}\operatorname{Aut}(X) &  \cong\mathcal{O}_{K}%
^{\times,+}/U\text{;}\\
\operatorname{gr}_{2}^{F}\operatorname{Aut}(X) &  \cong A_{U}\text{.}%
\end{align*}
For $\operatorname{gr}_{0}^{F}\operatorname{Aut}(X)$ the isomorphism is
non-canonical, while for $\operatorname{gr}_{i}^{F}\operatorname{Aut}(X)$ with
$i=1,2$ it is.
\end{theorem}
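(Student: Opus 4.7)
The plan is to deduce the theorem from Proposition \ref{prop_AutPi} by exploiting the fact that $X$ is aspherical: its universal cover $\mathbb{H}^{s}\times\mathbb{C}^{t}$ is contractible, so any biholomorphism $f\in\operatorname{Aut}(X)$ lifts to a biholomorphism of the universal cover whose conjugation action on the deck group $\pi$ is well-defined up to inner automorphisms. This yields a canonical homomorphism
\[
\operatorname{Aut}(X)\longrightarrow\operatorname{Out}(\pi)\text{,}
\]
and my strategy is to show it is injective, to compute $\operatorname{Out}(\pi)$ from Proposition \ref{prop_AutPi}, and then to identify its image.

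First I would check injectivity. If $f$ lies in the kernel, then some lift $\tilde{f}$ commutes with all of $\pi$. Commuting with the translations by $\underline{\sigma}(\mathcal{O}_{K})$ (whose $\mathbb{R}$-span is all of $\mathbb{R}^{s}\times\mathbb{C}^{t}$) forces $\tilde{f}$ itself to be a translation $\underline{z}\mapsto\underline{z}+\underline{c}$; commuting with the unit action $\underline{z}\mapsto\underline{\sigma}(u)\underline{z}$ then gives $(\sigma_{i}(u)-1)c_{i}=0$ componentwise for every $u\in U$, and choosing any $u\neq 1$ together with the injectivity of the $\sigma_i$ forces $\underline{c}=0$. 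Next, to compute $\operatorname{Out}(\pi)$, I would observe that $Z(\pi)=1$ (commuting with every $(a',1)$ forces the $U$-component to be $1$, and then commuting with every $(0,u')$ forces the $\mathcal{O}_{K}$-component to be $0$), so $\operatorname{Inn}(\pi)\cong\pi$. A direct evaluation of the Dietz triple for conjugation by $(a,u)\in\pi$ gives $\delta=\operatorname{id}$, $\alpha=$ multiplication by $u$, and $\beta(b)=(1-b)a$, which under the identification in Proposition \ref{prop_AutPi} places $\operatorname{Inn}(\pi)$ as the subgroup $\mathcal{O}_{K}\rtimes U\subseteq(\mathcal{O}_{K}:J(U))\rtimes\mathcal{O}_{K}^{\times}$. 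Quotienting respects the three-step filtration and produces
\[
\operatorname{Out}(\pi)\cong\bigl(\bigl((\mathcal{O}_{K}:J(U))/\mathcal{O}_{K}\bigr)\rtimes\bigl(\mathcal{O}_{K}^{\times}/U\bigr)\bigr)\rtimes A_{U}\text{.}
\]

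Finally I would cut down from $\operatorname{Out}(\pi)$ to the image of $\operatorname{Aut}(X)$. The only additional constraint comes from the $\mathcal{O}_{K}^{\times}/U$ piece: componentwise multiplication by $\underline{\sigma}(u)$ preserves the upper half-planes if and only if $\sigma_{i}(u)>0$ for every real place, i.e.\ if and only if $u\in\mathcal{O}_{K}^{\times,+}$, so this piece shrinks to $\mathcal{O}_{K}^{\times,+}/U$; the other two graded pieces are untouched (Galois automorphisms in $A_U$ automatically permute real places among themselves and complex places among themselves). Surjectivity onto the resulting subgroup is then immediate from Lemma \ref{Lemma_TranslationAction}, whose three constructions produce biholomorphisms realizing each graded piece. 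The non-canonical identification $\operatorname{gr}_{0}^{F}\operatorname{Aut}(X)\simeq\mathcal{O}_{K}/J(U)$ reflects the fact that the invertible fractional ideal $J(U)$ induces an abstract isomorphism $(\mathcal{O}_{K}:J(U))/\mathcal{O}_{K}\cong\mathcal{O}_{K}/J(U)$ of $\mathcal{O}_{K}$-modules only after a trivialization is chosen.

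The main obstacle I expect is the injectivity of $\operatorname{Aut}(X)\to\operatorname{Out}(\pi)$, which relies on the rigidity statement that the centralizer of $\pi$ in $\operatorname{Aut}(\mathbb{H}^{s}\times\mathbb{C}^{t})$ is trivial; once this is in hand the remaining steps are essentially bookkeeping with the filtration from Proposition \ref{prop_AutPi} and the explicit biholomorphisms from Lemma \ref{Lemma_TranslationAction}.
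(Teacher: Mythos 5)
Your proposal is correct and follows essentially the same route as the paper: realize the candidate subgroup via Lemma \ref{Lemma_TranslationAction}, reduce to the (outer) action on $\pi_{1}$ via Proposition \ref{prop_AutPi} (your explicit passage through $\operatorname{Out}(\pi)$, identifying $\operatorname{Inn}(\pi)$ with $\mathcal{O}_{K}\rtimes U$ inside the Dietz description, is just a cleaner bookkeeping of the paper's step of composing with a known biholomorphism $\theta'$), and kill the kernel by a rigidity argument on the universal cover. The one step you should not treat as formal is ``commuting with the lattice of translations forces $\tilde{f}$ to be a translation'': since $(\mathbb{H}^{s}\times\mathbb{C}^{t})/\underline{\sigma}(\mathcal{O}_{K})$ is noncompact, the constancy of the lattice-periodic function $\tilde{f}(\underline{z})-\underline{z}$ requires Oeljeklaus--Toma's lemma that this Cousin group carries no nonconstant holomorphic functions, which is exactly the input the paper uses (applied there to the partial derivatives of $\tilde{f}$) to conclude that $\tilde{f}$ is affine.
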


\begin{proof}
By Lemma \ref{Lemma_TranslationAction} all three groups in Equation \ref{l3}
indeed induce biholomorphisms, but jointly they generate the entire iterated
semi-direct product, so we just have to show that there are no other
biholomorphisms. Let $\theta:X\rightarrow X$ be an arbitrary biholomorphism.
It lifts to the universal covering space,%
\[
\tilde{\theta}:\mathbb{H}^{s}\times\mathbb{C}^{t}\longrightarrow\mathbb{H}%
^{s}\times\mathbb{C}^{t}\text{.}%
\]
Moreover, it induces a canonical map $\theta_{\ast}:\pi_{1}(X,\ast
)\rightarrow\pi_{1}(X,\ast)$ on the fundamental group, and by Prop.
\ref{prop_AutPi} we get an element in%
\[
\left(  \left.  (\mathcal{O}_{K}:J(U))\rtimes\mathcal{O}_{K}^{\times}\right.
\right)  \rtimes A_{U}\text{.}%
\]
We leave it to the reader to check that we can write $\mathcal{O}_{K}%
^{\times,+}$ instead of $\mathcal{O}_{K}^{\times}$, which amounts to the fact
that $\tilde{\theta}$ preserves being in the upper half plane. Now, by Lemma
\ref{Lemma_TranslationAction} we may associate a (possibly different)
biholomorphism $\theta^{\prime}$ to this element. Thus, we learn that
$f:=\theta\theta^{\prime-1}$ is a biholomorphism of $X$ which induces the
identity on $\pi_{1}(X,\ast)$. We are done once we prove that
$f=\operatorname*{id}$. Firstly, also $f$ lifts to an automorphism $\tilde{f}$
of the universal covering space. Since $\tilde{f}$ descends modulo the action
of $\mathcal{O}_{K}$, we deduce that for any $\gamma\in\mathcal{O}_{K}$ and
$\underline{z}=(z_{1},\ldots,z_{s+t})\in\mathbb{H}^{s}\times\mathbb{C}^{t}$
there exists some $\gamma_{z}^{\prime}\in\mathcal{O}_{K}$ such that%
\begin{equation}
\tilde{f}(\underline{z}+\underline{\sigma}(\gamma))-\tilde{f}(\underline
{z})=\underline{\sigma}(\gamma_{\underline{z}}^{\prime})\text{.}\label{lv1}%
\end{equation}
If we fix $\gamma$ and let the point $\underline{z}$ vary, the value of
$\gamma_{\underline{z}}^{\prime}$ must vary continuously in $\underline{z}$.
Since the image of $\underline{\sigma}$ is discrete, it follows that this
function is locally constant and since $\mathbb{H}^{s}\times\mathbb{C}^{t}$ is
connected, it must be constant in $\underline{z}$. Then taking derivatives of
Equation \ref{lv1} yields%
\[
\frac{\partial\tilde{f}}{\partial z_{i}}(\underline{z}+\sigma(\gamma
))=\frac{\partial\tilde{f}}{\partial z_{i}}(\underline{z})\text{.}%
\]
It follows that the partial derivatives $\frac{\partial\tilde{f}}{\partial
z_{i}}$ descend to the quotient $(\mathbb{H}^{s}\times\mathbb{C}%
^{t})/\underline{\sigma}(\mathcal{O}_{K})$. However, $(\mathbb{H}^{s}%
\times\mathbb{C}^{t})/\underline{\sigma}(\mathcal{O}_{K})$ is an example of a
Cousin group, as was proven by Oeljeklaus and Toma \cite[Lemma 2.4]{MR2141693}
(this is also discussed in \cite{MR3326586}, \cite{MR3341439}), it carries no
holomorphic functions except the constant ones. Thus, these partial
derivatives are necessarily constant. It follows that%
\begin{equation}
\tilde{f}(\underline{z})=A\underline{z}+B\label{lv1a}%
\end{equation}
for a matrix $A$. As $\tilde{f}$ induces the identity on $\pi_{1}$, it follows
that for any $u\in U$ and any $a\in\mathcal{O}_{K}$ we have%

\[
\tilde{f}(\sigma(u)\underline{z}+\sigma(a))=\sigma(u)\tilde{f}(\underline
{z})+\sigma(a).
\]
We hence get $A\sigma(a)+B=\sigma(u)B+\sigma(a)$ for all $u\in U,a\in
\mathcal{O}_{K}.$ But this plainly implies $A=\operatorname{id},B=0$.
\end{proof}

\section{\label{sect_RCFT}Review of some class field theory}

We briefly recall the (very few!) tools we need from class field theory. Let
$K$ be a number field. A \emph{modulus} for $K$ is a function%
\[
\mathfrak{m}:\{\text{places of the number field }K\}\longrightarrow
\mathbb{Z}_{\geq0}%
\]
such that (1) for all but finitely many places $P$ we have $\mathfrak{m}%
(P)=0$, (2) if $P$ is a real place, we only allow $\mathfrak{m}(P)\in\{0,1\}$,
and (3) for complex places $P$ we demand $\mathfrak{m}(P)=0$. The
algebro-geometrically inclined reader might prefer to think of a modulus as an
effective Weil divisor on%
\[
\operatorname*{Spec}(\mathcal{O}_{K})\cup\{\text{real places}\}\text{,}%
\]
where real places are only allowed to have multiplicity zero or one. Fitting
into this pattern, let $\mathfrak{m}_{0}\subseteq\mathcal{O}_{K}$ be the ideal
defined by the prime factorization%
\[
\mathfrak{m}_{0}=\prod P^{\mathfrak{m}(P)}\text{,}%
\]
i.e. literally we take the possibly non-reduced closed subscheme cut out by
the Weil divisor, ignoring the datum at the real places. One customarily also
says that an ideal $I$ divides $\mathfrak{m}$ if we have $\mathfrak{m}_{0}\mid
I$ as ideals in $\mathcal{O}_{K}$.

There is the standard group homomorphism%
\begin{equation}
\operatorname*{div}:K^{\times}\longrightarrow\coprod_{P\in\left(
\text{maximal ideals of }\mathcal{O}_{K}\right)  }\mathbb{Z}\text{,}\qquad
a\longmapsto(v_{P}(a))_{P}\text{,}\label{lci1}%
\end{equation}
which associates to any element $a\in K^{\times}$ the exponents $v_{P}(a)$ of
its unique prime ideal factorization, $P$ being one of the maximal primes.
Equivalently, this is the map sending a rational function on
$\operatorname*{Spec}\mathcal{O}_{K}$ to its Weil divisor. There is a slight
variation of this theme:

\begin{definition}
\label{Def_VariantsWithModulus}For $K$ a number field and $\mathfrak{m}$ a
modulus, define

\begin{enumerate}
\item $I^{S(\mathfrak{m})}:=\coprod_{P,\mathfrak{m}(P)=0}\mathbb{Z}$, where
$P$ runs through the prime ideals of $\mathcal{O}_{K}$; or equivalently this
is the group of Weil divisors of $\operatorname*{Spec}(\mathcal{O}_{K}%
)-\{$primes dividing $\mathfrak{m}\}$.

\item $K_{\mathfrak{m},1}:=\{a\in K^{\times}\mid v_{P}(a-1)\geq\mathfrak{m}%
(P)$ for all $P\mid\mathfrak{m}$, and moreover $\sigma(a)>0$ for all real
embeddings with $\mathfrak{m}(\sigma)=1\}$,

\item $U_{\mathfrak{m},1}:=K_{\mathfrak{m},1}\cap\mathcal{O}_{K}^{\times}$.
\end{enumerate}
\end{definition}

Once we pick an arbitrary modulus $\mathfrak{m}$, we can refine Equation
\ref{lci1}, in the obvious way, to a group homomorphism%
\[
K_{\mathfrak{m},1}\longrightarrow I^{S(\mathfrak{m})}\text{.}%
\]
If $\mathfrak{m}=1$ is the zero modulus, i.e. $\mathfrak{m}(P)=0$ for all
places $P$, this becomes Equation \ref{lci1}.

\begin{definition}
\label{Def_RayClassGroup}For an arbitrary modulus $\mathfrak{m}$ we call%
\[
C_{\mathfrak{m}}:=I^{S(\mathfrak{m})}/K_{\mathfrak{m},1}%
\]
the \emph{ray class group} modulo $\mathfrak{m}$.
\end{definition}

\begin{theorem}
[Global Class Field Theory]Let $K$ be a number field.

\begin{enumerate}
\item For every modulus $\mathfrak{m}$, the ray class group $C_{\mathfrak{m}}
$ is finite, and there exists a canonical finite abelian field extension
$L_{\mathfrak{m}}/K$ along with a canonical group isomorphism%
\[
\psi_{K,\mathfrak{m}}:C_{\mathfrak{m}}\overset{\sim}{\longrightarrow
}\operatorname*{Gal}(L_{\mathfrak{m}}/K)\text{.}%
\]
The field $L_{\mathfrak{m}}$ is known as the \emph{ray class field} of
$\mathfrak{m}$.

\item In fact, $L_{\mathfrak{m}}$ can be characterized uniquely as the largest
abelian field extension of $K$ such that the ramification of $L_{\mathfrak{m}%
}$ over $K$ is bounded from above by the multiplicities of $\mathfrak{m}$. The
multiplicity $0$ or $1$ at the real places means whether we allow a real place
to split into a pair of complex conjugate embeddings in $L_{\mathfrak{m}}$
(multiplicity $1$) or demand it to stay real (multiplicity $0$).

\item If $\mathfrak{m}\leq\mathfrak{m}^{\prime}$ this induces an
order-reversing correspondence $L_{\mathfrak{m}}\subseteq L_{\mathfrak{m}%
^{\prime}}$ and the diagram%
\[%
\begin{array}
[c]{rccc}%
\psi_{K,\mathfrak{m}^{\prime}}: & C_{\mathfrak{m}^{\prime}} & \overset{\sim
}{\longrightarrow} & \operatorname*{Gal}(L_{\mathfrak{m}^{\prime}}/K)\\
& \downarrow &  & \downarrow\\
\psi_{K,\mathfrak{m}}: & C_{\mathfrak{m}} & \overset{\sim}{\longrightarrow} &
\operatorname*{Gal}(L_{\mathfrak{m}}/K)
\end{array}
\]
commutes. Here the left-hand side downward arrow is the natural surjection
from changing $\mathfrak{m}$ in Definition \ref{Def_RayClassGroup}, while the
right-hand side downward arrow comes from the Galois tower%
\[%
\begin{array}
[c]{c}%
L_{\mathfrak{m}^{\prime}}\\
\mid\\
L_{\mathfrak{m}}\\
\mid\\
K
\end{array}
\]

\end{enumerate}
\end{theorem}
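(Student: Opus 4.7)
This is the main theorem of global class field theory; in practice one does not prove it from scratch but cites it, e.g.\ from Neukirch's \emph{Algebraic Number Theory}. Nonetheless, let me sketch how the three parts are assembled and which ingredient carries all the weight.

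The plan is to pass to the id\`elic language, where everything becomes uniform in $\mathfrak{m}$. Write $\mathbb{A}_K^{\times}$ for the id\`ele group and $C_K := \mathbb{A}_K^{\times}/K^{\times}$ for the id\`ele class group. To each modulus $\mathfrak{m}$ one associates an open finite-index subgroup $W_{\mathfrak{m}} \subseteq C_K$: units at finite places away from $\mathfrak{m}_0$, the prescribed congruence condition modulo $\mathfrak{m}_0$ at the places dividing it, and positive components at the real places marked by $\mathfrak{m}$. Strong approximation together with standard fractional-ideal bookkeeping identifies $C_K / W_{\mathfrak{m}} \cong C_{\mathfrak{m}}$, and finiteness then reduces to the classical compactness of $C_K^1 / K^{\times}$ (norm-one classes), itself Minkowski-style reduction theory.

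The heart of the argument is the combination of (a) the Artin reciprocity map $r_K : C_K \twoheadrightarrow \operatorname{Gal}(K^{\mathrm{ab}}/K)$, a canonical continuous surjection whose kernel is the connected component of the identity, and (b) the existence theorem: finite abelian extensions $L/K$ correspond bijectively to open finite-index subgroups of $C_K$ via their norm groups. Feeding $W_{\mathfrak{m}}$ into this correspondence produces $L_{\mathfrak{m}}$ and the isomorphism $\psi_{K,\mathfrak{m}}$ of part (1). Part (2) is then the translation of local class field theory: under $r_K$ the local unit groups $\mathcal{O}_{K_P}^{\times}$ match the inertia subgroups of $\operatorname{Gal}(K_P^{\mathrm{ab}}/K_P)$, so the prescribed bound on ramification at $P$ is cut out precisely by the $P$-component of $W_{\mathfrak{m}}$; hence $L_{\mathfrak{m}}$ is the largest abelian extension with ramification at most $\mathfrak{m}$. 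Part (3) is formal: $\mathfrak{m} \leq \mathfrak{m}'$ implies $W_{\mathfrak{m}'} \subseteq W_{\mathfrak{m}}$, so Galois theory yields $L_{\mathfrak{m}} \subseteq L_{\mathfrak{m}'}$ and functoriality of $r_K$ makes the square commute.

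The genuine obstacle is of course Artin reciprocity itself, which I would not attempt to reprove. The two canonical routes are the cohomological one, which builds the fundamental class in $H^2(\operatorname{Gal}(\overline{K}/K), C_K)$ from the fact that local Hasse invariants sum to zero, and the analytic one, which uses $L$-functions and a Chebotarev-type second inequality. Once reciprocity and the existence theorem are in hand, all three assertions above are formal consequences.
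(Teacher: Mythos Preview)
The paper does not prove this theorem at all: it is stated in the review section \S\ref{sect_RCFT} as a background result from class field theory, with no proof or even a sketch, and is used purely as a black box in the proof of Theorem~\ref{thm_m copy}. Your instinct in the first sentence is therefore exactly right --- one cites this rather than proves it --- and there is nothing in the paper to compare your outline against. Your id\`elic sketch is a reasonable summary of the standard route (existence theorem plus Artin reciprocity, with the local-global compatibility giving the ramification characterization), but for the purposes of this paper a one-line citation would have sufficed.
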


\subsection{\label{section_SpecialModuli}Exceptional moduli}

\begin{lemma}
\label{Lemma_EasyInclusion}Let $\mathfrak{m}$ be a modulus. Then
$J(U_{\mathfrak{m},1})\subseteq\mathfrak{m}_{0}$.
\end{lemma}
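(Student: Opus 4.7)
The plan is to unravel the definitions and verify the inclusion on generators. Recall that $J(U_{\mathfrak{m},1})$ is by Definition \ref{Old_def_IdealJ} the ideal of $\mathcal{O}_K$ generated by the elements $u-1$ with $u \in U_{\mathfrak{m},1}$. So it suffices to prove that each such generator lies in $\mathfrak{m}_0$.

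So I would pick an arbitrary $u \in U_{\mathfrak{m},1}$. By Definition \ref{Def_VariantsWithModulus}(2),(3) this means $u \in \mathcal{O}_K^\times$ and $v_P(u-1) \geq \mathfrak{m}(P)$ for every finite prime $P$ dividing $\mathfrak{m}$ (the real place condition plays no role here). For a finite prime $P$ with $\mathfrak{m}(P)=0$, the inequality $v_P(u-1)\geq 0$ is automatic since $u-1\in\mathcal{O}_K$. Thus $v_P(u-1)\geq \mathfrak{m}(P)$ for \emph{every} finite prime $P$, i.e.\ $u-1 \in P^{\mathfrak{m}(P)}$ for every $P$.

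Next I would invoke the standard fact that for pairwise distinct primes $P$, the prime-power ideals $P^{\mathfrak{m}(P)}$ are pairwise coprime, so their intersection equals their product (Chinese remainder / unique factorization of ideals in a Dedekind domain). Only finitely many of the exponents are nonzero, so this yields
\[
u-1 \in \bigcap_{P} P^{\mathfrak{m}(P)} = \prod_{P} P^{\mathfrak{m}(P)} = \mathfrak{m}_0.
\]
Since this holds for every generator, we conclude $J(U_{\mathfrak{m},1}) \subseteq \mathfrak{m}_0$.

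There is no real obstacle here; the statement is essentially a bookkeeping exercise comparing the valuation-theoretic definition of $K_{\mathfrak{m},1}$ with the ideal-theoretic definition of $\mathfrak{m}_0$. The only point worth flagging explicitly is that one must use the coprimality of distinct prime powers to pass from membership in each individual $P^{\mathfrak{m}(P)}$ to membership in their product, and that the archimedean (real place) data in $\mathfrak{m}$ are irrelevant because $\mathfrak{m}_0$ ignores them by construction.
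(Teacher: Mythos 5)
Your proof is correct and follows essentially the same route as the paper: both arguments reduce to checking $v_P(u-1)\geq\mathfrak{m}(P)$ for every finite prime $P$, which is immediate from the definition of $U_{\mathfrak{m},1}$ when $P\mid\mathfrak{m}$ and trivial when $\mathfrak{m}(P)=0$. The only cosmetic difference is that you verify the inclusion on the generators $u-1$ and appeal to $\mathfrak{m}_0$ being an ideal, whereas the paper writes out a general element $\sum a_i(u_i-1)$ and invokes the ultrametric inequality; these are the same argument.
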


\begin{proof}
Every element in $J(U_{\mathfrak{m},1})$ is of the shape $a=\sum a_{i}%
(u_{i}-1)$ for $a_{i}\in\mathcal{O}_{K}$ and $u_{i}\in U_{\mathfrak{m},1}$.
The unique prime ideal factorization of $\mathfrak{m}_{0}$ is (by the very
definition of $\mathfrak{m}_{0}$), $\mathfrak{m}_{0}=\prod P^{\mathfrak{m}%
(P)}$, and so it suffices to check that $v_{P}(a)\geq\mathfrak{m}(P)$ for all
prime ideals $P$. If $P$ divides $\mathfrak{m}$, we have%
\begin{equation}
v_{P}(u_{i}-1)\geq\mathfrak{m}(P)\label{lza1}%
\end{equation}
for all $u_{i}\in U_{\mathfrak{m},1}$, just by Definition
\ref{Def_VariantsWithModulus}, so by the ultrametric inequality for
valuations, we find%
\[
v_{P}(a)\geq\min\left\{  v_{P}(a_{i}(u_{i}-1))\right\}  \geq\min\left\{
v_{P}(u_{i}-1)\right\}  \geq\mathfrak{m}(P)\text{,}%
\]
so this is fine. If $P$ does not divide $\mathfrak{m}$, there is no
counterpart of the condition of Equation \ref{lza1} in the definition of
$U_{\mathfrak{m},1}$, so we just get $v_{P}(u_{i}-1)\geq0$ since $u_{i}%
\in\mathcal{O}_{K}^{\times}$ and therefore $u_{i}-1\in\mathcal{O}_{K}$ is
integral. On the other hand, then $\mathfrak{m}(P)=0$, so actually Equation
\ref{lza1} holds simply for \textit{all} prime ideals $P$.
\end{proof}

The following definition goes in the direction of a sufficient criterion to
have equality:

\begin{definition}
Let $K$ be a number field. We say that the modulus $\mathfrak{m}$ is
\emph{exceptional} if

\begin{enumerate}
\item it has $\mathfrak{m}(P)=1$ for all real places, and

\item the ideal $\mathfrak{m}_{0}$ admits a set of generators $g_{1}%
,\ldots,g_{r}$ such that each $g_{i}+1$ is a totally positive unit, i.e. an
element of $\mathcal{O}_{K}^{\times,+}$.
\end{enumerate}
\end{definition}

\begin{lemma}
\label{Lemma_EasyInclusionConverse}If $\mathfrak{m}$ is an exceptional
modulus, we have equality of ideals $J(U_{\mathfrak{m},1})=\mathfrak{m}_{0}$.
\end{lemma}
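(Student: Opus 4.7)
The plan is to combine the already-established inclusion $J(U_{\mathfrak{m},1}) \subseteq \mathfrak{m}_0$ from Lemma \ref{Lemma_EasyInclusion} with a direct verification of the reverse inclusion $\mathfrak{m}_0 \subseteq J(U_{\mathfrak{m},1})$, exploiting the generators provided by the exceptionality hypothesis.

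Concretely, let $g_1, \ldots, g_r$ be a set of generators of $\mathfrak{m}_0$ such that each $u_i := g_i + 1$ is a totally positive unit. Since $J(U_{\mathfrak{m},1})$ is an ideal of $\mathcal{O}_K$ and the $g_i = u_i - 1$ already generate $\mathfrak{m}_0$, it suffices to show that each $u_i$ actually lies in $U_{\mathfrak{m},1}$; then by Definition \ref{Old_def_IdealJ} the element $u_i - 1 = g_i$ lies in $J(U_{\mathfrak{m},1})$, and the reverse inclusion follows.

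So the main step is to verify the three defining conditions of $U_{\mathfrak{m},1}$ (cf.\ Definition \ref{Def_VariantsWithModulus}) for each $u_i$. First, $u_i \in \mathcal{O}_K^{\times}$ by assumption. Second, for every real embedding $\sigma$ with $\mathfrak{m}(\sigma) = 1$ one needs $\sigma(u_i) > 0$; but since $\mathfrak{m}$ is exceptional, $\mathfrak{m}(\sigma) = 1$ holds at \emph{all} real places, and $u_i$ is totally positive, so $\sigma(u_i) > 0$ trivially. Third, for every prime $P \mid \mathfrak{m}$ one must check $v_P(u_i - 1) \geq \mathfrak{m}(P)$; but $u_i - 1 = g_i \in \mathfrak{m}_0 = \prod_P P^{\mathfrak{m}(P)}$, so $v_P(g_i) \geq \mathfrak{m}(P)$ by the very definition of $\mathfrak{m}_0$.

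There is essentially no obstacle here: the content is entirely packaged into the definition of \emph{exceptional modulus}, which was designed precisely to make this verification trivial. The only mild point to be careful about is that both defining conditions of $U_{\mathfrak{m},1}$ at the real places and at the finite primes must be checked simultaneously for the \emph{same} element $u_i$, and it is the exceptionality clause (1) — namely $\mathfrak{m}(P)=1$ at every real place — that ensures the totally positive generators we produce actually satisfy the positivity requirement built into $U_{\mathfrak{m},1}$.
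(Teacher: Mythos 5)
Your proof is correct and follows essentially the same route as the paper: use Lemma \ref{Lemma_EasyInclusion} for one inclusion, and for the other verify that each $g_i+1$ satisfies the defining conditions of $U_{\mathfrak{m},1}$, so that $g_i=(g_i+1)-1\in J(U_{\mathfrak{m},1})$ and the ideal generated by the $g_i$ is contained in $J(U_{\mathfrak{m},1})$. No issues.
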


\begin{proof}
The inclusion $J(U_{\mathfrak{m},1})\subseteq\mathfrak{m}_{0}$ is just Lemma
\ref{Lemma_EasyInclusion}. We show the converse $\mathfrak{m}_{0}\subseteq
J(U_{\mathfrak{m},1})$: Suppose $g\in\mathfrak{m}_{0}$. Then \textit{if} $g+1
$ happens to be a totally positive unit, we get%
\[
v_{P}(\underset{\in\mathcal{O}_{K}^{\times,+}}{(g+1)}-1)=v_{P}(g)\geq
\mathfrak{m}(P)
\]
for all prime ideals $P$, and moreover $\sigma(g+1)>0$ for all the real places
$\sigma$. So in this case, we indeed have $g+1\in U_{\mathfrak{m},1}$. Thus,
for an arbitrary $a\in\mathfrak{m}_{0}$, we expand it in terms of the ideal
generators%
\[
a=\sum a_{i}g_{i}=\sum a_{i}(\underset{\in U_{\mathfrak{m},1}}{\underbrace
{(g_{i}+1)}}-1)\in J(U_{\mathfrak{m},1})\text{.}%
\]

\end{proof}

Let us discuss a little how to work with exceptional moduli:

\begin{example}
Suppose $\mathfrak{m}$ is a given modulus with $\mathfrak{m}(P)=1$ for all
real places and we want to check whether it is exceptional. To this end,
compute the ray unit group $U_{\mathfrak{m},1}$. If $J(U_{\mathfrak{m},1}%
)\neq\mathfrak{m}_{0}$, then $\mathfrak{m}$ is not exceptional because
otherwise this would contradict Lemma \ref{Lemma_EasyInclusionConverse}.
Conversely, if $J(U_{\mathfrak{m},1})=\mathfrak{m}_{0}$, then $\mathfrak{m}$
is exceptional since the ideal $J$ by its very definition is indeed generated
from units $g_{i}$ such that $g_{i}+1\in U_{\mathfrak{m},1}$ and
$U_{\mathfrak{m},1}\subseteq\mathcal{O}_{K}^{\times,+}$ by our condition on
the real places.
\end{example}

\begin{example}
Of course, computing $J(U_{\mathfrak{m},1})$ is costly, so for explicit
example cases of exceptional moduli, the approach of the previous example is
not to be recommended. Much better, one should simply pick a finite index
subgroup $U\subseteq\mathcal{O}_{K}^{\times,+}$ and right away work with
$\mathfrak{m}_{0}:=J(U)$, and $\mathfrak{m}(P)=1$ for all real places. Then
$\mathfrak{m}$ is an exceptional modulus by construction. We may consider this
strategy for the following family \cite[\S 9]{otarith}: Suppose $m\geq1 $.
Then the polynomial%
\[
f(T)=T^{3}+mT-1
\]
is irreducible, generates a cubic number field $K$ with one real and one
complex place, and the image of $T$ in the number field, which we denote by
$u:=\overline{T}$, is a totally positive unit. Take $U_{l}:=\left\langle
u^{l}\right\rangle $. Now, one needs to compute the fundamental unit $v$ of
$K$ so that%
\[
\mathcal{O}_{K}^{\times}=\left\langle -1\right\rangle \times\left\langle
v\right\rangle \qquad\text{and}\qquad\mathcal{O}_{K}^{\times,+}=\left\langle
1\right\rangle \times\left\langle v\right\rangle \text{,}%
\]
i.e. $\mathcal{O}_{K}^{\times}/\mathcal{O}_{K}^{\times,+}\simeq\{\pm1\}$.
Define an exceptional modulus $\mathfrak{m}$ via $\mathfrak{m}_{0}:=J(U_{l})$.
It follows that $J(U_{\mathfrak{m},1})=J(U_{l})$. In a single computation, one
finds the exponent $e$ in $u=v^{e}$, and then $U/U_{\mathfrak{m},1}%
=\{\pm1\}\times\mathbb{Z}/(le\mathbb{Z})$, so that $\#U/U_{\mathfrak{m}%
,1}=2le$. We see that this produces an infinite family of exceptional moduli.
\end{example}

\section{\label{sect_TorsionHomologyRayClassGroups}Torsion homology and ray
class groups}

Next, we need the following important computation from classical class field theory:

\begin{proposition}
\label{Prop_BasicRayClassFieldSequence}For $K$ an arbitrary number field and
$\mathfrak{m}$ an arbitrary modulus such that $\mathfrak{m}(P)=1$ for all real
places, there is an exact sequence of abelian groups%
\[
1\longrightarrow\frac{\mathcal{O}_{K}^{\times,+}}{U_{\mathfrak{m},1}%
}\longrightarrow\left(  \mathcal{O}_{K}/\mathfrak{m}_{0}\right)  ^{\times
}\longrightarrow C_{\mathfrak{m}}\longrightarrow C\longrightarrow0\text{.}%
\]
Here $C$ denotes the ordinary ideal class group (= $C_{0}$, the ray class
group for the trivial modulus).
\end{proposition}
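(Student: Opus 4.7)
\medskip

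The plan is to derive this as the standard long exact sequence relating ray class groups to the ordinary class group, via a snake lemma argument applied to two compatible short exact sequences involving generators of principal ideals, combined with a Chinese Remainder Theorem / weak approximation identification.

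I would begin by setting up two short exact sequences. The top row
\[
1\longrightarrow U_{\mathfrak{m},1}\longrightarrow K_{\mathfrak{m},1}\longrightarrow K_{\mathfrak{m},1}/U_{\mathfrak{m},1}\longrightarrow 1
\]
records the principal fractional ideals generated by elements of $K_{\mathfrak{m},1}$ (i.e., totally positive elements that are $\equiv 1$ modulo $\mathfrak{m}_{0}$). The bottom row
\[
1\longrightarrow\mathcal{O}_{K}^{\times,+}\longrightarrow K^{S(\mathfrak{m}),+}\longrightarrow P^{+}(\mathfrak{m})\longrightarrow 1
\]
records the principal fractional ideals in $I^{S(\mathfrak{m})}$ admitting a totally positive generator; here $K^{S(\mathfrak{m}),+}$ denotes the totally positive elements of $K^{\times}$ coprime to $\mathfrak{m}$. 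The hypothesis $\mathfrak{m}(P)=1$ for every real place is essential: it forces $K_{\mathfrak{m},1}\subseteq K^{S(\mathfrak{m}),+}$ and $U_{\mathfrak{m},1}\subseteq\mathcal{O}_{K}^{\times,+}$, giving a vertical inclusion of these two sequences.

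The snake lemma then yields
\[
1\longrightarrow\mathcal{O}_{K}^{\times,+}/U_{\mathfrak{m},1}\longrightarrow K^{S(\mathfrak{m}),+}/K_{\mathfrak{m},1}\longrightarrow P^{+}(\mathfrak{m})/\operatorname{im}(K_{\mathfrak{m},1})\longrightarrow 1.
\]
I would identify the middle term as $(\mathcal{O}_{K}/\mathfrak{m}_{0})^{\times}$ by constructing the reduction map $a\mapsto\bar a$. Kernel is exactly $K_{\mathfrak{m},1}$ since $\bar a = 1$ means $v_{P}(a-1)\geq\mathfrak{m}(P)$ at all $P\mid\mathfrak{m}$, and total positivity is already built into the source. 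Surjectivity follows because any lift of $\bar u\in(\mathcal{O}_{K}/\mathfrak{m}_{0})^{\times}$ in $\mathcal{O}_{K}$ can be adjusted within its coset modulo $\mathfrak{m}_{0}$ to be totally positive (total positivity is an open archimedean condition, and the coset is nonempty open in each archimedean completion).

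Finally, I would splice with the tautological short exact sequence
\[
1\longrightarrow\ker(C_{\mathfrak{m}}\to C)\longrightarrow C_{\mathfrak{m}}\longrightarrow C\longrightarrow 0
\]
coming from the definitions $C_{\mathfrak{m}} = I^{S(\mathfrak{m})}/\operatorname{im}(K_{\mathfrak{m},1})$ and $C = I/K^{\times}$, together with the standard surjectivity of $I^{S(\mathfrak{m})}\twoheadrightarrow C$ (every ideal class has a representative coprime to $\mathfrak{m}$).

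The main obstacle is the final identification $P^{+}(\mathfrak{m})/\operatorname{im}(K_{\mathfrak{m},1}) = \ker(C_{\mathfrak{m}}\to C)$: one must show that every principal ideal in $I^{S(\mathfrak{m})}$ admits a totally positive generator modulo $K_{\mathfrak{m},1}$, i.e., after rescaling by a totally positive element congruent to $1$ modulo $\mathfrak{m}_{0}$. This is exactly the place where the condition $\mathfrak{m}(P)=1$ at every real place is needed and where care must be taken: signs of generators are adjusted using units and the freedom that the modulus allows real ramification.
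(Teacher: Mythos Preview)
The paper does not actually give an argument; it simply cites Neukirch, Ch.~VI, \S1, Exercise~13 and Prop.~1.11. Your snake-lemma construction with weak approximation is precisely the standard proof behind that reference, so methodologically you are doing exactly what the citation points to.

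However, the ``main obstacle'' you isolate in your last paragraph is a genuine gap, and it is not removable under the stated hypotheses alone. You need
\[
P^{+}(\mathfrak{m})/\operatorname{im}(K_{\mathfrak{m},1})=\ker(C_{\mathfrak{m}}\to C)=P^{S(\mathfrak{m})}/\operatorname{im}(K_{\mathfrak{m},1}),
\]
hence $P^{+}(\mathfrak{m})=P^{S(\mathfrak{m})}$: every principal ideal coprime to $\mathfrak{m}_{0}$ must admit a totally positive generator. Since every element of $K_{\mathfrak{m},1}$ is itself totally positive (because $\mathfrak{m}(P)=1$ at all real places), rescaling by $K_{\mathfrak{m},1}$ cannot alter sign patterns, so the question reduces to whether the sign map $\mathcal{O}_{K}^{\times}\to\{\pm1\}^{s}$ is surjective. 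This fails in general; for instance in $K=\mathbb{Q}(\sqrt{3})$ every unit $\pm(2+\sqrt{3})^{n}$ has the same sign at both real places, and the ideal $(\sqrt{3})$ has no totally positive generator. The sequence that Neukirch actually proves carries $(\mathcal{O}/\mathfrak{m})^{\ast}\cong(\mathcal{O}_{K}/\mathfrak{m}_{0})^{\times}\times\{\pm1\}^{s}$ in the second slot together with $\mathcal{O}_{K}^{\times}/U_{\mathfrak{m},1}$ in the first; collapsing the archimedean $\{\pm1\}^{s}$ factor to reach the form stated here requires exactly that surjectivity. Your instinct that this step needs extra input is correct --- it goes through automatically when $s=1$ (since $-1$ already hits the nontrivial sign), but not otherwise.
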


\begin{proof}
This is \cite[Ch. VI, \S 1, Exercise 13]{MR1697859}. This exercise follows
directly from \cite[Ch. VI, \S 1, (1.11)\ Prop.]{MR1697859}.
\end{proof}

The cardinalities $h_{\mathfrak{m}}:=\left\vert C_{\mathfrak{m}}\right\vert $
(and same for the trivial modulus, $h:=\left\vert C\right\vert $) are known as
the \emph{ray class number} (resp. \emph{class number}).

\begin{theorem}
\label{thm_m copy}Let $K$ be a number field with $s\geq1$ real places and
precisely one complex place. Moreover, suppose $\mathfrak{m}$ is an
exceptional modulus. Then $U_{\mathfrak{m},1}$ is an admissible subgroup in
the sense of \cite[\S 1]{MR2141693}. Let $X:=X(K,U_{\mathfrak{m},1})$ be the
corresponding Oeljeklaus-Toma manifold. Then the graded Euler characteristic%
\[
\chi^{F}(\operatorname{Aut}(X))=\prod_{i}\left\vert \operatorname{gr}_{i}%
^{F}\operatorname{Aut}(X)\right\vert ^{(-1)^{i}}%
\]
satisfies%
\[
\frac{h_{\mathfrak{m}}}{h}\leq\frac{\chi^{F}(\operatorname{Aut}(X))}%
{\left\vert A_{U_{\mathfrak{m},1}}\right\vert }\text{,}%
\]
where $h_{\mathfrak{m}}$ denotes the ray class number of $\mathfrak{m}$ and
$h$ is the ordinary class number.
\end{theorem}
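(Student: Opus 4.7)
The plan is to expand the graded Euler characteristic via Theorem \ref{thm_Aut}, use the exceptional hypothesis to identify $J(U_{\mathfrak{m},1})$ with $\mathfrak{m}_0$, and then compare with $h_\mathfrak{m}/h$ using the class field theory sequence of Proposition \ref{Prop_BasicRayClassFieldSequence}. Everything should eventually reduce to the trivial fact that a finite ring has at least as many elements as its group of units.

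First I would check that $U_{\mathfrak{m},1}$ is admissible. Since $t=1$, Dirichlet's unit theorem gives that $\mathcal{O}_K^{\times}$ has rank $s+t-1=s$, and by the argument in the proof of Proposition \ref{Prop_KappaAgreesWithOK} the subgroup $\mathcal{O}_K^{\times,+}$ is torsion-free, hence free abelian of rank $s$. The exact sequence of Proposition \ref{Prop_BasicRayClassFieldSequence} (which is available because $\mathfrak{m}(P)=1$ at all real places by the definition of exceptional) embeds $\mathcal{O}_K^{\times,+}/U_{\mathfrak{m},1}$ into the finite group $(\mathcal{O}_K/\mathfrak{m}_0)^{\times}$. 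So $U_{\mathfrak{m},1}$ is a finite-index torsion-free subgroup of rank $s$, and its elements are totally positive by definition; admissibility in the sense of \cite[\S 1]{MR2141693} follows.

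Next I would apply Theorem \ref{thm_Aut} to obtain
\[
\chi^F(\operatorname{Aut}(X))
= \frac{\left|\mathcal{O}_K/J(U_{\mathfrak{m},1})\right|\cdot\left|A_{U_{\mathfrak{m},1}}\right|}{\left|\mathcal{O}_K^{\times,+}/U_{\mathfrak{m},1}\right|}.
\]
Since $\mathfrak{m}$ is exceptional, Lemma \ref{Lemma_EasyInclusionConverse} gives $J(U_{\mathfrak{m},1}) = \mathfrak{m}_0$, so
\[
\frac{\chi^F(\operatorname{Aut}(X))}{\left|A_{U_{\mathfrak{m},1}}\right|}
= \frac{\left|\mathcal{O}_K/\mathfrak{m}_0\right|}{\left|\mathcal{O}_K^{\times,+}/U_{\mathfrak{m},1}\right|}.
\]
On the other hand, taking the alternating product of orders in the four-term exact sequence of Proposition \ref{Prop_BasicRayClassFieldSequence} (all terms are finite) yields
\[
\frac{h_\mathfrak{m}}{h}
= \frac{|C_\mathfrak{m}|}{|C|}
= \frac{\left|(\mathcal{O}_K/\mathfrak{m}_0)^\times\right|}{\left|\mathcal{O}_K^{\times,+}/U_{\mathfrak{m},1}\right|}.
\]
Combining the two displays, the claimed inequality reduces to $\left|(\mathcal{O}_K/\mathfrak{m}_0)^\times\right| \leq \left|\mathcal{O}_K/\mathfrak{m}_0\right|$, which is obvious since the unit group of any finite ring is a subset of the ring.

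There is no genuine obstacle here; the content has already been packaged into Theorem \ref{thm_Aut}, Lemma \ref{Lemma_EasyInclusionConverse}, and Proposition \ref{Prop_BasicRayClassFieldSequence}. The only conceptual point worth emphasising is that the exceptional hypothesis is used \emph{exactly} to replace the abstract $\mathcal{O}_K/J(U_{\mathfrak{m},1})$ appearing in $\operatorname{gr}_0^F\operatorname{Aut}(X)$ by the concrete ring $\mathcal{O}_K/\mathfrak{m}_0$, whose cardinality dominates the unit-group cardinality governing $h_\mathfrak{m}/h$. The gap in the inequality is precisely the number of non-units in $\mathcal{O}_K/\mathfrak{m}_0$.
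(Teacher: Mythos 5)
Your proposal is correct and follows essentially the same route as the paper: admissibility via finiteness of $\mathcal{O}_K^{\times,+}/U_{\mathfrak{m},1}$, the formula for $\chi^F$ from Theorem \ref{thm_Aut}, the identification $J(U_{\mathfrak{m},1})=\mathfrak{m}_0$ from Lemma \ref{Lemma_EasyInclusionConverse}, and the reduction to $\left\vert(\mathcal{O}_K/\mathfrak{m}_0)^{\times}\right\vert\leq\left\vert\mathcal{O}_K/\mathfrak{m}_0\right\vert$. The only (harmless) difference is that you extract $h_{\mathfrak{m}}/h$ directly from the alternating product of orders in the four-term sequence, whereas the paper detours through the tower $K\subseteq H\subseteq L_{\mathfrak{m}}$ and Artin reciprocity before invoking the same exactness.
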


\begin{proof}
We begin with the $4$-term exact sequence of Prop.
\ref{Prop_BasicRayClassFieldSequence}. Since $\mathfrak{m}$ is a exceptional
modulus, by Lemma \ref{Lemma_EasyInclusionConverse} we have $J(U_{\mathfrak{m}%
,1})=\mathfrak{m}_{0}$, so this sequence specializes to%
\begin{equation}
1\longrightarrow\frac{\mathcal{O}_{K}^{\times,+}}{U_{\mathfrak{m},1}%
}\longrightarrow\left(  \frac{\mathcal{O}_{K}}{J(U_{\mathfrak{m},1})}\right)
^{\times}\longrightarrow\ker\left(  C_{\mathfrak{m}}\twoheadrightarrow
C\right)  \longrightarrow0\text{.}\label{ags2}%
\end{equation}
Although there are much more direct ways to show this, note that this implies
that $U/U_{\mathfrak{m},1}$ is finite. In particular, the free rank of
$U_{\mathfrak{m},1}$ agrees with the one of $U=\mathcal{O}_{K}^{\times}$, and
so is $s$ by Dirichlet's Unit Theorem. Moreover, $U_{\mathfrak{m},1}%
\subseteq\mathcal{O}_{K}^{\times,+}$ lies in the subgroup of totally positive
units thanks to our condition on the real places in the modulus. It follows
that $U_{\mathfrak{m},1}$ is admissible in the sense of Oeljeklaus and Toma.
Next, class field theory for the trivial modulus as well as $\mathfrak{m}$
produces the tower of class fields%
\[%
\begin{array}
[c]{ccl}%
L_{\mathfrak{m}} & \quad & \text{ray class field for }\mathfrak{m}\\
\mid &  & \\
H & \quad & \text{Hilbert class field}\\
\mid &  & \\
K &  &
\end{array}
\]
so that the Artin reciprocity symbol provides us with canonical and natural
isomorphisms%
\[
\operatorname*{Gal}(L_{\mathfrak{m}}/K)\cong C_{\mathfrak{m}}\qquad
\text{and}\qquad\operatorname*{Gal}(H/K)\cong C\text{.}%
\]
Thus, we have $\operatorname*{Gal}(L_{\mathfrak{m}}/H)\cong\ker
(C_{\mathfrak{m}}\twoheadrightarrow C)$; and moreover by the tower law of
field extension degrees, $h_{\mathfrak{m}}=\left\vert \operatorname*{Gal}%
(L_{\mathfrak{m}}/K)\right\vert \cdot h$. We obtain the first and second
equalities in the following equation, and the third follows from the exactness
of Sequence \ref{ags2}:%
\begin{equation}
\frac{h_{\mathfrak{m}}}{h}=\left\vert \ker(C_{\mathfrak{m}}\twoheadrightarrow
C)\right\vert =\left\vert \operatorname*{Gal}(L_{\mathfrak{m}}/H)\right\vert
=\frac{\left\vert \left(  \frac{\mathcal{O}_{K}}{J(U_{\mathfrak{m},1}%
)}\right)  ^{\times}\right\vert }{\left\vert \frac{\mathcal{O}_{K}^{\times,+}%
}{U_{\mathfrak{m},1}}\right\vert }\text{.}\label{lcio5a}%
\end{equation}
By Theorem \ref{thm_Aut} we have a canonical filtration of the biholomorphism
group,%
\begin{align}
\operatorname{gr}_{0}^{F}\operatorname{Aut}(X) &  \simeq\mathcal{O}%
_{K}/J(U_{\mathfrak{m},1})\text{;}\nonumber\\
\operatorname{gr}_{1}^{F}\operatorname{Aut}(X) &  \cong\mathcal{O}_{K}%
^{\times,+}/U_{\mathfrak{m},1}\text{;}\label{lcio5}\\
\operatorname{gr}_{2}^{F}\operatorname{Aut}(X) &  \cong A_{U_{\mathfrak{m},1}%
}\text{.}\nonumber
\end{align}
Thus, if we form a type of multiplicative Euler characteristic along the
graded pieces%
\[
\chi^{F}(\operatorname{Aut}(X)):=\prod_{i}\left\vert \operatorname{gr}_{i}%
^{F}\operatorname{Aut}(X)\right\vert ^{(-1)^{i}}=\frac{\left\vert
\mathcal{O}_{K}/J(U_{\mathfrak{m},1})\right\vert \cdot\left\vert
A_{U_{\mathfrak{m},1}}\right\vert }{\left\vert \mathcal{O}_{K}^{\times
,+}/U_{\mathfrak{m},1}\right\vert }\text{,}%
\]
we deduce from Equation \ref{lcio5a} that%
\[
\frac{h_{\mathfrak{m}}}{h}=\frac{\left\vert \left(  \frac{\mathcal{O}_{K}%
}{J(U_{\mathfrak{m},1})}\right)  ^{\times}\right\vert }{\left\vert
\frac{\mathcal{O}_{K}^{\times,+}}{U_{\mathfrak{m},1}}\right\vert }\leq
\frac{\left\vert \frac{\mathcal{O}_{K}}{J(U_{\mathfrak{m},1})}\right\vert
}{\left\vert \frac{\mathcal{O}_{K}^{\times,+}}{U_{\mathfrak{m},1}}\right\vert
}=\frac{\chi^{F}(\operatorname{Aut}(X))}{\left\vert A_{U_{\mathfrak{m},1}%
}\right\vert }\text{.}%
\]
This finishes the proof.
\end{proof}

In a way, the principal point we wish to call attention to is that the
so-called ray class group of a modulus $\mathfrak{m}$, or the Galois group
which is associated to it by class field theory, sits in a canonical exact
sequence%
\begin{equation}
1\longrightarrow\frac{\mathcal{O}_{K}^{\times,+}}{U_{\mathfrak{m},1}%
}\longrightarrow\left(  \frac{\mathcal{O}_{K}}{J(U_{\mathfrak{m},1})}\right)
^{\times}\longrightarrow\operatorname*{Gal}(L_{\mathfrak{m}}/H)\longrightarrow
0\text{,}\label{labi1}%
\end{equation}
while (as we have shown) the automorphism group of $X(K,U)$ possesses a
canonical filtration $F_{\bullet}$ whose graded pieces are (non-canonically)
isomorphic to the groups in Equation \ref{lcio5}. The group
$A_{U_{\mathfrak{m},1}}$ will frequently be trivial. Whenever this happens,
note that the Sequence \ref{labi1} could, albeit with quite some abuse of
language, be rewritten as%
\[
\text{\textquotedblleft}1\longrightarrow\operatorname{gr}_{1}^{F}%
\operatorname{Aut}(X)\longrightarrow\left(  \operatorname{gr}_{0}%
^{F}\operatorname{Aut}(X)\right)  ^{\times}\longrightarrow\operatorname*{Gal}%
(L_{\mathfrak{m}}/H)\longrightarrow0\text{\textquotedblright.}%
\]

\section{Exponential torsion asymptotics}

Finally, in the case of Oeljeklaus--Toma surfaces, the homology torsion growth
can be related to the Mahler measure of a minimal polynomial.

\begin{theorem}
\label{thm_TorsionMahler}Let $K$ be a number field with $s=t=1$ embeddings and
$u$ a (totally) positive fundamental unit, i.e. $\mathcal{O}_{K}^{\times
,+}\simeq\mathbb{Z}\left\langle u\right\rangle $. Then all groups%
\[
U_{n}:=\mathbb{Z}\left\langle u^{n}\right\rangle
\]
are admissible, and for $X_{n}:=X(K,U_{n})$ we have%
\begin{equation}
\lim_{n\longrightarrow\infty}\frac{\log\left\vert H_{1}(X_{n},\mathbb{Z}%
)_{\operatorname*{tor}}\right\vert }{n}=\log M(f)\text{,}\label{lcd1}%
\end{equation}
where $f$ is the minimal polynomial of the unit $u$, and $M(f)$ denotes the
Mahler measure of $f$. Hence, $\left\vert H_{1}(X_{n},\mathbb{Z}%
)_{\operatorname*{tor}}\right\vert $ always grows asymptotically exponentially
as $n\rightarrow+\infty$.
\end{theorem}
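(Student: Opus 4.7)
The plan is to reduce the torsion homology to the absolute norm of a concrete algebraic integer, and then to evaluate the asymptotics via the factorization of the norm over embeddings that defines the Mahler measure.

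First I verify admissibility and reduce to a norm. Since $s+t-1=1$, Dirichlet's theorem gives $\mathcal{O}_K^{\times,+}\simeq\mathbb{Z}$, so $U_n$ is free abelian of rank one and is contained in the totally positive units; hence it is admissible in the sense of \cite[\S 1]{MR2141693}. Every element of $U_n$ has the form $u^{nk}$, and the identity $u^{nk}-1=(u^n-1)(u^{n(k-1)}+\cdots+1)$ for $k\ge 1$, together with $u^{-nk}-1=-u^{-nk}(u^{nk}-1)$, shows that all generators of $J(U_n)$ lie in $(u^n-1)$, which is itself such a generator. Hence $J(U_n)=(u^n-1)$, and by Proposition \ref{Prop_KappaAgreesWithOK}
\[
|H_1(X_n,\mathbb{Z})_{\operatorname{tor}}|=|\mathcal{O}_K/J(U_n)|=|N_{K/\mathbb{Q}}(u^n-1)|.
\]

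Next, let $\sigma_1$ denote the real embedding with $\sigma_1(u)=u\in\mathbb{R}_{>0}$, and $\sigma_2,\sigma_3$ the complex conjugate pair with $\sigma_2(u)=\alpha$, $\sigma_3(u)=\bar{\alpha}$. Since $u$ is a unit, $u\cdot|\alpha|^2=|N_{K/\mathbb{Q}}(u)|=1$. As $u$ generates an infinite cyclic group, $u\neq 1$, and consequently $|\alpha|\neq 1$ (since $|\alpha|=1$ would force $u=1$). Thus every embedded value avoids the unit circle. Factoring the norm gives
\[
\frac{\log|N_{K/\mathbb{Q}}(u^n-1)|}{n}=\sum_{i=1}^{3}\frac{\log|\sigma_i(u)^n-1|}{n}.
\]
For any $\beta\in\mathbb{C}$ with $|\beta|\neq 1$, an elementary estimate yields $\lim_{n\to\infty}n^{-1}\log|\beta^n-1|=\log^+|\beta|:=\max(0,\log|\beta|)$; when $|\beta|>1$ this follows from $|\beta^n-1|=|\beta|^n(1+O(|\beta|^{-n}))$, and when $|\beta|<1$ from $|\beta^n-1|\to 1$. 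Applying this to each summand gives
\[
\lim_{n\to\infty}\frac{\log|H_1(X_n,\mathbb{Z})_{\operatorname{tor}}|}{n}=\sum_{i=1}^{3}\log^+|\sigma_i(u)|,
\]
which is precisely $\log M(f)$, since $f$ is monic (as the minimal polynomial of an algebraic integer). The exponential growth claim then follows: by Kronecker's theorem, a monic integer polynomial whose roots all lie on the closed unit disc has only roots of unity as roots, and $u$ has infinite multiplicative order, so $M(f)>1$.

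The one point that could in principle be delicate is the evaluation of $\lim n^{-1}\log|\beta^n-1|$ when $|\beta|=1$, which generally requires Baker-type lower bounds on linear forms in logarithms. It is precisely the special structure of our situation, namely $s=t=1$ combined with $u$ being a real unit of infinite order, that forces $u\cdot|\alpha|^2=1$ with $u\neq 1$, pushing every embedded value strictly off the unit circle and keeping the argument entirely elementary.
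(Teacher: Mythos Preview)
Your proof is correct and follows essentially the same approach as the paper: reduce $|H_1(X_n,\mathbb{Z})_{\operatorname{tor}}|$ to $|N_{K/\mathbb{Q}}(u^n-1)|$ via Proposition~\ref{Prop_KappaAgreesWithOK} and the identification $J(U_n)=(u^n-1)$, factor over embeddings, use the $s=t=1$ structure to force all embedded values off the unit circle, and take the termwise limit. Your use of $\log^+$ to treat both cases $|\sigma_1(u)|\gtrless 1$ simultaneously is a mild streamlining of the paper's case split, and you prove $J(U_n)=(u^n-1)$ directly where the paper cites \cite[Lemma~2]{otarith}, but the substance is identical.
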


Note that in this case each $X(K,U_{n})$ is an Inoue surface $X$ of type
$S^{0}$. Further, by Dirichlet's Unit Theorem, $\mathcal{O}_{K}^{\times}%
\simeq\left\langle -1\right\rangle \times\mathbb{Z}\left\langle u\right\rangle
$ with $u$ any fundamental unit. Thus, either $u$ is totally positive so that
$\mathcal{O}_{K}^{\times,+}\simeq\mathbb{Z}\left\langle u\right\rangle $, or
otherwise this is true after replacing $u$ by $-u$. Hence, once we have
$s=t=1$, a choice of $u$ as in the statement of the theorem is always possible.

\begin{proof}
By Prop. \ref{Prop_KappaAgreesWithOK} we have $\left\vert H_{1}(X_{n}%
,\mathbb{Z})_{\operatorname*{tor}}\right\vert =\left\vert \mathcal{O}%
_{K}/J(\left\langle u^{n}\right\rangle )\right\vert $, where $\left\langle
u^{n}\right\rangle $ denotes the subgroup of $\mathcal{O}_{K}^{\times,+}$
which is generated by $u^{n}$; or equivalently the unique subgroup of
$\mathcal{O}_{K}^{\times,+}$ of index $n$. By Lemma \cite[Lemma 2]{otarith} we
have $J(\left\langle u^{n}\right\rangle )=(1-u^{n})$. Hence,%
\[
\left\vert \mathcal{O}_{K}/J(\left\langle u^{n}\right\rangle )\right\vert
=\left\vert N_{K/\mathbb{Q}}(1-u^{n})\right\vert =\left\vert \sigma
_{i}(1-u^{n})\right\vert \text{,}%
\]
where $\sigma_{i}$ for $i=1,2,3$ denotes the three complex embeddings (one
real, say $\sigma_{1}$, and one complex conjugate pair, say $\sigma_{2}%
,\sigma_{3}:=\overline{\sigma_{2}}$). We have%
\[
1=\left\vert N_{K/\mathbb{Q}}(u)\right\vert =\left\vert \sigma_{1}%
(u)\right\vert \left\vert \sigma_{2}(u)\right\vert ^{2}%
\]
since $u$ is a unit. If $\left\vert \sigma_{i}(u)\right\vert \leq1$ for all
$i$, then this equations forces that $\left\vert \sigma_{i}(u)\right\vert =1$
for all $i$, and then by Kronecker's Theorem $u$ must be a root of unity,
which is impossible (by Dirichlet's Unit Theorem $u$ generates the non-torsion
part of the unit group). Hence, we must have $\left\vert \sigma_{1}%
(u)\right\vert >1$ and thus $\left\vert \sigma_{2}(u)\right\vert <1 $, or the
other way round. We will now only handle the case $\left\vert \sigma
_{1}(u)\right\vert >1$ and leave the opposite case to the reader. We compute
\[
\left\vert N_{K/\mathbb{Q}}(u^{n})\right\vert =\left\vert \sigma
_{1}(u)\right\vert ^{n}\cdot\left\vert \sigma_{2}(u)\right\vert ^{2n}%
\]
and therefore%
\begin{align*}
\frac{\log\left\vert H_{1}(X_{n},\mathbb{Z})_{\operatorname*{tor}}\right\vert
}{n}  &  =\frac{\log\left\vert 1-\sigma_{1}(u^{n})\right\vert }{n}+2\frac
{\log\left\vert 1-\sigma_{2}(u^{n})\right\vert }{n}\\
&  =\frac{\log\left\vert \sigma_{1}(u^{n})\left(  \sigma_{1}(u^{n}%
)^{-1}-1\right)  \right\vert }{n}+2\frac{\log\left\vert 1-\sigma_{2}%
(u^{n})\right\vert }{n}\\
&  =\log\left\vert \sigma_{1}(u)\right\vert +\frac{\log\left\vert 1-\left(
\sigma_{1}(u)^{-1}\right)  ^{n}\right\vert }{n}+2\frac{\log\left\vert
1-\sigma_{2}(u)^{n}\right\vert }{n}%
\end{align*}
and since $\left\vert \sigma_{1}(u)^{-1}\right\vert <1$ and $\left\vert
\sigma_{2}(u)\right\vert <1$, it follows that the second and third summand
converge to zero as $n\rightarrow+\infty$. Next, since $\left\vert \sigma
_{1}(u)\right\vert >1$ and $\left\vert \sigma_{2}(u)\right\vert <1$, the
Mahler measure also satisfies $M(f)=\left\vert \sigma_{1}(u)\right\vert $,
proving Equation \ref{lcd1} in this case. Furthermore, this means that%
\[
\left\vert H_{1}(X_{n},\mathbb{Z})_{\operatorname*{tor}}\right\vert
\approx\left\vert \sigma_{1}(u)\right\vert ^{n}\qquad\text{for large }n
\]
with $\left\vert \sigma_{1}(u)\right\vert >1$, so the torsion homology of
$H_{1}$ grows strictly exponentially as an asymptotic. As explained, we leave
the other case to the reader. The argument is entirely symmetric, just
swapping the roles of $\sigma_{1}$ and $\sigma_{2}$.
\end{proof}

The previous proof explains the intense torsion growth which we had
computationally observed in \cite{otarith}, but which at that time had
appeared somewhat mysterious.

This type of argument is not new, however, it might be new in the field of
complex surfaces. It is a well-known type of behaviour in $3$-manifold
topology and knot invariants. In fact, it turns out that Inoue surfaces, by
the general fact that their fundamental group has a canonical epimorphism to
$\mathbb{Z}$,%
\[
\pi_{1}(X)\longrightarrow\mathbb{Z}%
\]
form an example of a space with an \textquotedblleft augmented
group\textquotedblright\ as fundamental group, in the sense of Silver and
Williams \cite{MR1955605}. One can rephrase the previous theorem in such a way
that it becomes a special case of \cite[Prop. 2.5]{MR1955605}. To this end,
note that $\prod_{\zeta^{n}=1}\triangle(\zeta)$ in \cite[Equation
2.2]{MR1955605}, can also be rewritten as a resultant, and the previous proof
can alternatively be spelled out as a computation of exactly this resultant.
We will not go into this in detail since the above proof is quicker than
citing \cite[Prop. 2.5]{MR1955605}. Nonetheless, this elucidates the general picture.

\bibliographystyle{amsalpha}
\bibliography{ollinewbib}

\end{document}